\documentclass[preprint,12pt]{elsarticle}

\usepackage{graphicx}
\usepackage{amsmath,amssymb,amsfonts}
\usepackage{amsthm}
\usepackage{mathtools}
\usepackage{mathrsfs}
\usepackage{xurl}
\usepackage{hyperref}

\newtheorem{theorem}{Theorem}[section]
\newtheorem{proposition}[theorem]{Proposition}

\newtheorem{corollary}[theorem]{Corollary}

\theoremstyle{definition}

\theoremstyle{remark}
\newtheorem{remark}[theorem]{Remark}

\newcommand{\HH}{\mathbb{H}}
\newcommand{\ZZ}{\mathbb{Z}}

\newcommand{\GL}{\mathrm{GL}}

\begin{document}
	\begin{frontmatter}
\title{\bfseries
	Period Relations for Theta Products and Elliptic Integral Moments
}

\author[temple]{Dianbin Bao}
\ead{tud53299@temple.edu}

\affiliation[temple]{
	organization={Temple University},
	city={Philadelphia},
	state={PA},
	country={USA}
}

\begin{abstract}
We construct period polynomial relations for finite systems of theta products stable under modular transformations and use them to derive identities among critical $L$-values. Our main example is a three-component theta system of weight $5$, whose coupled period polynomials are determined explicitly and yield new cross-form relations among critical values of the associated eta products. These relations are not consequences of the functional equations of the individual forms. We also develop a weight-$4$ system arising from a quadratic twist of conductor $3$ and obtain relations linking critical values of the original and twisted modular forms. Through modular parametrizations by complete elliptic integrals, these $L$-value identities yield corresponding moment identities. The method gives a unified modular-symbol explanation of several previously known elliptic integral moment relations while producing new critical-value relations between distinct modular forms.
\end{abstract}
	
\begin{keyword}period polynomials\sep critical $L$-values\sep theta products\sep
modular forms\sep elliptic integral moments
\end{keyword}


\end{frontmatter}
	\section{Introduction}
Moments of complete elliptic integrals were studied extensively by Wan
\cite{MR2845511}, and related identities were investigated by Zhou
\cite{MR3231318} using methods involving Legendre functions and spherical
rotations. Connections between these moments and critical values of modular $L$-functions
and lattice sums were developed further by Rogers, Wan, and Zucker
\cite{MR3338042} and by Wan and Zucker \cite{MR3490555}. A recurring
phenomenon in this work is that several elliptic integral moments can be
identified with critical values of modular $L$-functions, while the
corresponding critical values themselves satisfy relations involving rational
multiples of powers of $\pi$. 

The contribution of the present paper is not to rederive these moment
evaluations analytically, but to identify a modular mechanism that explains
why such relations occur. The key point is to apply period-polynomial methods
simultaneously to finite systems of modular forms that are permuted by modular
transformations. Instead of considering the period polynomial of a single
form in isolation, we study coupled period vectors associated with several
theta or eta products. Modular transformations and modular-symbol
decompositions then impose relations between these period vectors and,
consequently, between critical values of distinct modular forms. This produces
cross-form critical value identities that are not simply consequences of the
functional equation of any one of the forms involved. In the examples arising
from elliptic integral moments, this gives a conceptual interpretation of
known identities as consequences of modular symmetry; in the quadratic-twist
example considered below, the same method also yields explicit cross-relations
between the critical values of two different modular forms.

The period-theoretic background is classical. Manin's work on modular symbols
and periods \cite{MR314846,MR345909} and Shimura's theory of periods of modular
forms \cite{MR463119} provide a general framework for the algebraic structure
of critical values. What is specific to the present work is the use of this
framework for finite-dimensional systems of modular forms rather than for a
single form. We identify systems of theta and eta products that are stable
under suitable slash operations and combine their transformation laws with
explicit modular-symbol decompositions. The resulting coupled relations among
period polynomials determine relations among the corresponding critical
$L$-values directly from modular symmetry.

This mechanism appears in several concrete forms. For certain theta-product
systems, a finite collection of modular forms is stable under the slash
actions of the standard generators $S:\tau\mapsto-1/\tau$ and
$T:\tau\mapsto\tau+1$, and the resulting Manin relations determine the
associated period vectors. In other examples, Hecke or Atkin--Lehner
symmetries select the relevant combinations of periods. For pairs of modular
forms related by twisting, suitable slash transformations together with
modular-symbol decompositions between appropriate cusps produce direct
relations between the period polynomials of the two forms, and hence between
their critical values. Thus the various examples treated below are instances
of a common principle: modular symmetry acting on a finite system of forms can
force relations among critical values belonging to different $L$-functions.

\subsection{Overview of the results}

In Section~\ref{three-theta} we develop this mechanism for a three-component
system of weight-$5$ theta products. The corresponding period polynomials are
determined from the actions of $S$ and $T$ together with modular symbol
relations, and their eta-product realizations produce cross-form
critical value relations and give a conceptual explanation of several
elliptic integral identities previously studied by Wan \cite{MR2845511},
Rogers, Wan, and Zucker \cite{MR3338042}, and Zhou \cite{MR3231318}.

Section~\ref{weight-4} treats a two-component system of weight-$4$ eta
quotients. A modular-symbol decomposition through the cusp $\frac{1}{4}$ relates the
period polynomials of the two forms and explains the resulting cross relations
among their critical values.

In Section~\ref{conductor3} we apply the same viewpoint to the weight-$4$
level-$6$ form
\[
\phi(\tau)=\eta(\tau)^2\eta(2\tau)^2\eta(3\tau)^2\eta(6\tau)^2
\]
and its quadratic twist $\psi=\phi\otimes\chi_{-3}$. The conductor-$3$
translation and Fricke symmetry generate a small slash-stable system whose
cusp orbit forms an ideal quadrilateral. The resulting modular symbol
relation yields new cross relations between critical values of $\phi$ and
$\psi$, which in turn connect with known Bessel-moment evaluations.

Finally, Section~\ref{sec:general-framework} formulates the common
modular-symbol framework underlying these examples.

\section{Preliminaries}
	\label{sec:preliminaries}
Throughout the paper, let $\tau\in\HH$ and set
\[
	q=e^{2\pi i\tau},\qquad \mathfrak q=e^{\pi i\tau}=q^{1/2}.
\]
Let
	\[
	\eta(\tau)=q^{1/24}\prod_{n=1}^{\infty}(1-q^n)
	\]
be the Dedekind eta function. We shall use the standard transformation laws
	\[
	\eta(\tau+1)=e^{\pi i/12}\eta(\tau),
	\qquad
	\eta\left(-\frac{1}{\tau}\right)=\sqrt{-i\tau}\,\eta(\tau).
	\]
For a cusp form
	\[
	f(\tau)=\sum_{n=1}^{\infty}a_nq^n,
	\]
we denote its $L$-function by
	\[
	L(f,s)=\sum_{n=1}^{\infty}\frac{a_n}{n^s}.
	\]
	
	\subsection{Theta functions and elliptic parametrization}
The classical theta constants are
	\begin{align*}
		\theta_2(\tau)&=\sum_{n\in\ZZ}\mathfrak q^{(n+\frac{1}{2})^2},\\
		\theta_3(\tau)&=\sum_{n\in\ZZ}\mathfrak q^{n^2},\\
		\theta_4(\tau)&=\sum_{n\in\ZZ}(-1)^n\mathfrak q^{n^2}.
	\end{align*}
	
For the standard generators
	\[
	S=\begin{pmatrix}0&-1\\1&0\end{pmatrix},
	\qquad
	T=\begin{pmatrix}1&1\\0&1\end{pmatrix},
	\]
the theta constants satisfy
	\begin{align*}
		\theta_2(-1/\tau)&=(-i\tau)^{1/2}\theta_4(\tau),
		&
		\theta_2(\tau+1)&=e^{\pi i/4}\theta_2(\tau),\\
		\theta_3(-1/\tau)&=(-i\tau)^{1/2}\theta_3(\tau),
		&
		\theta_3(\tau+1)&=\theta_4(\tau),\\
		\theta_4(-1/\tau)&=(-i\tau)^{1/2}\theta_2(\tau),
		&
		\theta_4(\tau+1)&=\theta_3(\tau).
	\end{align*}
We also use the standard theta--eta identities
	\begin{align*}
		\theta_2(\tau)&=2\frac{\eta(2\tau)^2}{\eta(\tau)},\\
		\theta_3(\tau)&=\frac{\eta(\tau)^5}
		{\eta(\tau/2)^2\eta(2\tau)^2},\\
		\theta_4(\tau)&=\frac{\eta(\tau/2)^2}{\eta(\tau)}.
	\end{align*}
For $0<k<1$, let
	\[
	K(k)=\int_0^{\pi/2}
	\frac{d\theta}{\sqrt{1-k^2\sin^2\theta}},
	\qquad
	K'(k)=K\bigl(\sqrt{1-k^2}\bigr).
	\]
For the classical theta-function parametrization of complete elliptic integrals, see Borwein and Borwein \cite{MR1641658}. The elliptic nome is
	\[
	\mathfrak q=e^{-\pi K'(k)/K(k)}=e^{\pi i\tau},
	\]
	and the classical theta parametrization gives
	\[
	K(k)=\frac{\pi}{2}\theta_3(\tau)^2,
	\]
	together with
	\[
	k=\frac{\theta_2(\tau)^2}{\theta_3(\tau)^2},
	\qquad
	\sqrt{1-k^2}=\frac{\theta_4(\tau)^2}{\theta_3(\tau)^2}.
	\]
Finally, differentiation of the nome gives
\[
	dk=\frac{2k(1-k^2)K(k)^2}
	{\pi^2\mathfrak q}\,d\mathfrak q.
\]

\subsection{Period polynomials}
We recall the standard period polynomial formalism for cusp forms. For background, see Manin \cite{MR314846,MR345909} and Shimura \cite{MR463119}.
Let $f(\tau)=\sum_{n=1}^{\infty}a_nq^n$ be a cusp form of integral weight $\kappa$. Its period polynomial is defined by 
\[
r_f(X):=\int_0^{i\infty}f(\tau)(\tau-X)^{\kappa-2}\,d\tau.
\]
For $0\leq j\leq \kappa-2$, the Mellin transform gives
\[
\int_0^{i\infty}f(\tau)\tau^j\,d\tau
=i^{j+1}\frac{\Gamma(j+1)}{(2\pi)^{j+1}}L(f,j+1),
\]
and hence
\[
r_f(X)=\sum_{j=0}^{\kappa-2}
(-1)^{\kappa-2-j}\binom{\kappa-2}{j}
i^{j+1}\frac{\Gamma(j+1)}{(2\pi)^{j+1}}
L(f,j+1)X^{\kappa-2-j}.
\]
Thus relations satisfied by the period polynomial translate into relations among the critical $L$-values.

\section{A three-component theta system and its period polynomials}\label{three-theta}

Consider the theta products
\[
F_1(\tau)=\theta_2(\tau)\theta_3(\tau)^8\theta_4(\tau),\qquad
F_2(\tau)=\theta_2(\tau)\theta_3(\tau)\theta_4(\tau)^8,
\]
and
\[
F_3(\tau)=\theta_2(\tau)^8\theta_3(\tau)\theta_4(\tau).
\]
The standard transformation formulas for the theta constants give
\begin{align*}
	F_1(-1/\tau)&=(-i\tau)^5F_1(\tau),\\
	F_2(-1/\tau)&=(-i\tau)^5F_3(\tau),\\
	F_3(-1/\tau)&=(-i\tau)^5F_2(\tau),
\end{align*}
and
\begin{align*}
	F_1(\tau+1)&=e^{\pi i/4}F_2(\tau),\\
	F_2(\tau+1)&=e^{\pi i/4}F_1(\tau),\\
	F_3(\tau+1)&=F_3(\tau).
\end{align*}

For $\gamma=\begin{pmatrix}a&b\\c&d\end{pmatrix}$, we use the weight $5$ slash operator
\[
(f|_5\gamma)(\tau):=(c\tau+d)^{-5}f\left(\frac{a\tau+b}{c\tau+d}\right).
\]
It follows that
\begin{align*}
	F_1|_5S&=-iF_1,& F_1|_5T&=e^{\pi i/4}F_2,\\
	F_2|_5S&=-iF_3,& F_2|_5T&=e^{\pi i/4}F_1,\\
	F_3|_5S&=-iF_2,& F_3|_5T&=F_3.
\end{align*}
Hence the space spanned by $F_1,F_2,F_3$ is stable under the weight $5$ slash actions of both $S$ and $T$.

Equivalently, writing
\[
\mathbf F(\tau):=\bigl(F_1(\tau),F_2(\tau),F_3(\tau)\bigr)^T,
\]
we have
\[
\mathbf F|_5S=-i
\begin{pmatrix}
	1&0&0\\
	0&0&1\\
	0&1&0
\end{pmatrix}\mathbf F,
\qquad
\mathbf F|_5T=
\begin{pmatrix}
	0&e^{\pi i/4}&0\\
	e^{\pi i/4}&0&0\\
	0&0&1
\end{pmatrix}\mathbf F.
\]
These transformation laws will be used below to derive the corresponding relations among the period integrals.

For $j=1,2,3$, define
	\[
	R_j(X):=\int_0^{i\infty}F_j(\tau)(\tau-X)^3\,d\tau.
	\]
	The transformation laws obtained above impose strong restrictions on these period polynomials.  In particular, the $S$-transformation of $F_1$, together with a modular-symbol relation arising from the ideal triangle with vertices $0$, $i\infty$, and $-1$, determines $R_1$ up to a scalar.
	
	\begin{theorem}
		The period polynomial of $F_1$ is given by
		\[
		R_1(X)=R_1(0)(1-X^2)(1+iX).
		\]
	\end{theorem}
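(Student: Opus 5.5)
The plan is to turn the modular transformation laws for $\mathbf F$ into linear functional equations for the vector $(R_1,R_2,R_3)$ of cubic polynomials. Then I would solve the resulting finite linear system and check that the solution space for $R_1$ is one-dimensional, spanned by $(1-X^2)(1+iX)$.

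\emph{Step 1: the $S$-relation for $R_1$.} Substitute $\tau=-1/w$ in the definition of $R_1$ and use $F_1|_5S=-iF_1$. Since $(-1/w-X)^3=-(1+Xw)^3/w^3$ and $d\tau=dw/w^2$, and the path $0\to i\infty$ is reversed, one obtains
\[
R_1(X)=-i\int_0^{i\infty}F_1(w)(1+Xw)^3\,dw=-iX^3R_1(-1/X).
\]
Write $R_1(X)=a_0+a_1X+a_2X^2+a_3X^3$. The relation then says $a_3=-ia_0$ and $a_2=ia_1$. This leaves two free parameters, so one more independent linear relation is needed.

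\emph{Step 2: the triangle relation.} Use the modular-symbol relation $\{0,i\infty\}=\{-1,i\infty\}-\{-1,0\}$ for the ideal triangle with vertices $0,-1,i\infty$, integrating $F_1(\tau)(\tau-X)^3\,d\tau$.
\begin{itemize}
\item On $\{-1,i\infty\}$, substitute $\tau=w-1$. Since $F_1(w-1)=e^{-\pi i/4}F_2(w)$, this piece equals $e^{-\pi i/4}R_2(X+1)$.
\item On $\{-1,0\}$, first apply $\tau\mapsto-1/\tau$, which sends the segment to $\{1,i\infty\}$ and replaces $F_1$ by $-iF_1$. Then shift by $T$, so that $F_1(w+1)=e^{\pi i/4}F_2(w)$. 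This piece becomes an explicit multiple of $(X+1)^3R_2\bigl(-1/(X+1)\bigr)$, up to the obvious rescaling.
\end{itemize}
So $R_1$ is expressed through $R_2$ alone. To close the system, I would add the analogous relations for the other components. These are the $S$-relations $R_2(X)=-iX^3R_3(-1/X)$ and $R_3(X)=-iX^3R_2(-1/X)$, obtained exactly as in Step 1. I would also write the triangle relations for $F_2$ and for $F_3$. The $T$-invariance of $F_3$ makes its triangle relation particularly clean: the standard three-term Manin relation $R_3|(1+U+U^2)=0$, with $U=TS$, holds with the appropriate sign twist.

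\emph{Step 3: linear algebra.} This yields a homogeneous linear system in the $12$ coefficients of $R_1,R_2,R_3$. I would eliminate $R_2$ and $R_3$ to get a single extra linear condition on $(a_0,a_1)$. The target answer $(1-X^2)(1+iX)=1+iX-X^2-iX^3$ corresponds to $a_1=ia_0$, and it is consistent with Step 1. As a sanity check, I would also use the $q$-expansion: $F_1=\mathfrak q^{1/4}\cdot(\text{series})$. From the Mellin formula, $a_0$ and $a_2$ involve $L(F_1,3)$ and $L(F_1,1)$ with prescribed powers of $i$, and $a_1,a_3$ involve $L(F_1,2)$ and $L(F_1,4)$. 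So the final answer is equivalent to the two identities $a_3=-ia_0$ (a functional-equation statement) and $a_1=ia_0$, $a_2=-a_0$ (a genuine relation among critical values).

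\emph{Main obstacle.} I expect the hardest part to be Step 3: verifying that the coupled system really has rank large enough to pin $R_1$ down to one dimension. Because $F_1$ is not $T$-invariant, a single triangle relation for $F_1$ only links it to $F_2$. I would first try to eliminate using $T^2$, which acts on $F_1$ by the scalar $e^{\pi i/2}=i$, via the triangle with vertices $-2,0,i\infty$ decomposed through $-1$. If that does not suffice, I would assemble the full $12\times12$ system and compute its kernel directly.
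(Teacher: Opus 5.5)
Your Step~1 is the same as the paper's first step, and the framework ($S$-relation plus one ideal-triangle relation) is the right one. \textbf{The gap} is that the proposal stops at the only nontrivial point: you never derive the second linear condition on $(a_0,a_1)$. You also leave open whether the system has enough rank, deferring to a $T^2$ trick or a $12\times12$ kernel computation that you do not carry out.

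Two ingredients are also misstated.
\begin{itemize}
\item With $\zeta=e^{\pi i/4}$, the substitution $\tau=-1/(w+1)$ and $F_1|_5ST=-i\zeta F_2$ give $\int_{-1}^{0}F_1(\tau)(\tau-X)^3\,d\tau=i\zeta X^3R_2(-1-1/X)$. This is not a multiple of $(X+1)^3R_2(-1/(X+1))$.
\item $F_3$ does not satisfy a closed three-term relation $R_3|(1+U+U^2)=0$ with a sign twist, because $F_3|_5S=-iF_2$ is not a multiple of $F_3$. The third side of its triangle carries $F_3|_5ST=-i\zeta F_1$, so its relation is $R_3(X)-R_3(X+1)+i\zeta X^3R_1(-1-1/X)=0$, which involves $R_1$.
\end{itemize}
Your full-kernel fallback would succeed in principle, since this relation is among those you list, but not with the relation as you describe it.

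That $F_3$ relation is exactly the one the paper uses, and it finishes the proof at once. By $T$-invariance, $R_3(X)-R_3(X+1)$ has degree at most $2$. Hence the $X^3$-coefficient of the last term, namely $i\zeta R_1(-1)$, must vanish. With $R_1=a+bX+ibX^2-iaX^3$ from Step~1, the condition $R_1(-1)=0$ reads $(1+i)a+(i-1)b=0$, i.e.\ $b=ia$, which is the claim.

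Your own $F_1$-triangle relation, once computed correctly, also suffices, with no elimination of $R_2,R_3$. Since $i\zeta=-\zeta^{-1}$, it reads $R_1(X)=\zeta^{-1}\bigl[R_2(X+1)+X^3R_2(-1-1/X)\bigr]$. At $X=-1$ both arguments of $R_2$ equal $0$, so $R_1(-1)=\zeta^{-1}\bigl(R_2(0)-R_2(0)\bigr)=0$. Your worry that a single $F_1$ relation ``only links it to $F_2$'' is therefore unfounded. The missing idea is not more rank but the observation that a triangle relation forces $R_1(-1)=0$, either by the paper's degree count or by evaluating at $X=-1$.
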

	
	\begin{proof}
	Making the substitution $\tau=-1/z$ in the defining integral, and using
	$F_1(-1/z)=(-iz)^5F_1(z)$, we obtain
	\begin{align*}
		R_1(X)
		&=\int_{i\infty}^{0}(-iz)^5F_1(z)
		\left(-\frac{1}{z}-X\right)^3\frac{dz}{z^2}\\
		&=-i\int_0^{i\infty}F_1(z)(1+Xz)^3\,dz\\
		&=-iX^3R_1(-1/X).
	\end{align*}
Write
		\[
		R_1(X)=a+bX+cX^2+dX^3.
		\]
		Comparing coefficients in the preceding relation gives
		\[
		d=-ia,\qquad c=ib,
		\]
		and hence
		\[
		R_1(X)=a+bX+ibX^2-iaX^3.
		\]
It remains to determine the ratio $b/a$.  For this we apply Cauchy's theorem to the differential
\[
F_3(\tau)(\tau-X)^3\,d\tau
\]
around the positively oriented ideal triangle with vertices $0$, $\infty$, and $-1$,
\[
0\longrightarrow i\infty\longrightarrow -1\longrightarrow 0.
\]
We denote by $\mathcal{C}$ the upper semicircular geodesic in $\HH$ joining $-1$ to $0$, which forms the third side of the triangle.  More precisely, one truncates the triangle near its three cusps, applies Cauchy's theorem to the resulting compact region, and then passes to the limit.  The contributions from the auxiliary boundary arcs vanish by the cuspidal decay of the theta products.
		
		The integral along the first side is $R_3(X)$.  On the second side, from $i\infty$ to $-1$, make the substitution $\tau=z-1$.  Since
		\[
		F_3|_5T=F_3,
		\]
		this contribution is
		\[
		-R_3(X+1).
		\]
		
		For the third side $\mathcal{C}$, use the substitution
		\[
		\tau=-\frac{1}{z+1}=(ST)z.
		\]
		The transformation laws of $F_1,F_2,F_3$ give
		\[
		F_3|_5ST=(F_3|_5S)|_5T=-i(F_2|_5T)=-ie^{\pi i/4}F_1.
		\]
		Therefore
		\[
		F_3\left(-\frac{1}{z+1}\right)=-ie^{\pi i/4}(z+1)^5F_1(z).
		\]
		Since
		\[
		d\tau=\frac{dz}{(z+1)^2},
		\]

		the integral along the third side $\mathcal{C}$ becomes

		\begin{align*}
			\int_\mathcal{C}F_3(\tau)(\tau-X)^3\,d\tau
			&=\int_0^{i\infty}
			F_3\left(-\frac{1}{z+1}\right)
			\left(-\frac{1}{z+1}-X\right)^3
			\frac{dz}{(z+1)^2}\\
		&=ie^{\pi i/4}\int_0^{i\infty}F_1(z)\bigl(1+X(z+1)\bigr)^3\,dz\\
			&=ie^{\pi i/4}X^3\int_0^{i\infty}F_1(z)
			\left(z+1+\frac{1}{X}\right)^3\,dz\\
			&=ie^{\pi i/4}X^3R_1\left(-1-\frac{1}{X}\right).
		\end{align*}
		Cauchy's theorem therefore gives the relation
		\[
		R_3(X)-R_3(X+1)+ie^{\pi i/4}X^3R_1\left(-1-\frac{1}{X}\right)=0.
		\]
		
		Now $R_3(X)-R_3(X+1)$ has degree at most $2$.  Hence the coefficient of $X^3$ in the last term must vanish.  That coefficient is
		\[
		ie^{\pi i/4}R_1(-1),
		\]
		so
		\[
		R_1(-1)=0.
		\]
		Substituting the expression already obtained for $R_1$ gives
		\[
		0=(1+i)a+(-1+i)b,
		\]
		and therefore
		\[
		b=ia.
		\]
		It follows that
		\[
		R_1(X)=a(1+iX-X^2-iX^3)=a(1-X^2)(1+iX).
		\]
		Since $a=R_1(0)$, the result follows.
	\end{proof}

We now turn to the remaining two components of the theta system.  Recall that
	\[
	R_j(X):=\int_0^{i\infty}F_j(\tau)(\tau-X)^3\,d\tau,\qquad j=1,2,3,
	\]
	and that
	\[
	R_1(X)=R_1(0)(1-X^2)(1+iX).
	\]
	The $S$-transformation laws
	\[
	F_2|_5S=-iF_3,\qquad F_3|_5S=-iF_2
	\]
	give, by the substitution $\tau=-1/z$ in the defining period integrals,
	\[
	R_2(X)=-iX^3R_3\left(-\frac{1}{X}\right),\qquad
	R_3(X)=-iX^3R_2\left(-\frac{1}{X}\right).
	\]
	
	We also obtain a Manin relation by integrating the differential
	\[
	F_2(\tau)(\tau-X)^3\,d\tau
	\]
	around the positively oriented ideal triangle
	\[
	0\longrightarrow i\infty\longrightarrow -1\longrightarrow 0.
	\]
	The first side contributes $R_2(X)$.  For the second side, the
	substitution $\tau=z-1$ together with $F_1|_5T=\zeta F_2$ gives
	$-\zeta^{-1}R_1(X+1)$.  For the third side, we use
	\[
	\tau=-\frac{1}{z+1}=(ST)z
	\]
	together with $F_2|_5ST=-iF_3$.  Hence Cauchy's theorem gives
	\[
	R_2(X)-\zeta^{-1}R_1(X+1)
	+iX^3R_3\left(-1-\frac{1}{X}\right)=0,
	\qquad
	\zeta=e^{\pi i/4}.
	\]
	
	\begin{theorem}
		The period polynomials $R_2$ and $R_3$ satisfy
		\[
		R_2(0)=\frac{2\sqrt{2}}{3}R_1(0),
		\]
		and
		\[
		R_2(X)=R_2(0)\left(1+\frac{3i}{4}X-\frac{1}{4}X^2\right)-iR_3(0)X^3,
		\]
		\[
		R_3(X)=R_3(0)+R_2(0)\left(\frac{i}{4}X-\frac{3}{4}X^2-iX^3\right).
		\]
	\end{theorem}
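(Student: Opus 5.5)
The plan is to reduce everything to finite linear algebra on coefficients, using three inputs: the $S$-relations between $R_2$ and $R_3$, the Manin relation for $F_3$ obtained in the proof of the previous theorem, and the Manin relation for $F_2$ stated just above. Write
\[
R_2(X)=p_0+p_1X+p_2X^2+p_3X^3,
\]
so that $p_0=R_2(0)$. Substituting into $R_3(X)=-iX^3R_2(-1/X)$ gives
\[
R_3(X)=ip_3-ip_2X+ip_1X^2-ip_0X^3 .
\]
In particular $R_3(0)=ip_3$, i.e.\ $p_3=-iR_3(0)$, and this already accounts for the $-iR_3(0)X^3$ term in $R_2$. The other $S$-relation, $R_2(X)=-iX^3R_3(-1/X)$, is then automatic. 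By the previous theorem we also have $R_1(Y)=a(1-Y^2)(1+iY)$ with $a=R_1(0)$.

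First I would use the Manin relation for $F_3$ from the proof of the previous theorem,
\[
R_3(X)-R_3(X+1)+ie^{\pi i/4}X^3R_1\left(-1-\tfrac1X\right)=0 .
\]
With $Y=-(X+1)/X$ one has $1-Y^2=-(2X+1)/X^2$ and $1+iY=\bigl((1-i)X-i\bigr)/X$. Hence
\[
X^3R_1\left(-1-\tfrac1X\right)=-a(2X+1)\bigl((1-i)X-i\bigr),
\]
which is a quadratic in $X$. Writing $R_3(X)=c_0+c_1X+c_2X^2+c_3X^3$, the difference $R_3(X)-R_3(X+1)$ is
\[
-(c_1+c_2+c_3)-(2c_2+3c_3)X-3c_3X^2 .
\]
Matching coefficients of $1$, $X$, $X^2$ determines $c_3$, $c_2$, $c_1$ in terms of $a$, while $c_0$ stays free. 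Since $c_3=-ip_0$, this gives $R_2(0)$ as a multiple of $R_1(0)$. The factor $e^{\pi i/4}(1-i)=\sqrt2$ is what should produce the constant $2\sqrt2/3$.

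Next I would feed everything into the Manin relation for $F_2$,
\[
R_2(X)-\zeta^{-1}R_1(X+1)+iX^3R_3\left(-1-\tfrac1X\right)=0 .
\]
Expanding $X^3R_3(-1-1/X)$ in terms of $p_0,\dots,p_3$ and $R_1(X+1)$ in terms of $a$, and using $a=\tfrac{3}{2\sqrt2}p_0$, I would compare the four coefficients. This should give $p_1=\tfrac{3i}{4}p_0$ and $p_2=-\tfrac14p_0$, with $p_3$ left free. I would then cross-check these against the values of $c_1,c_2$ found in the first step. Those values correspond to $-ip_2$ and $ip_1$, i.e.\ $\tfrac{i}{4}p_0$ and $-\tfrac34p_0$, which matches the stated $R_3$.

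The main obstacle is bookkeeping rather than ideas. The $F_2$ relation mixes the roots of unity $\zeta^{\pm1}$ with the $\sqrt2$ coming from $R_1(0)$, and the resulting system is overdetermined. A sign error, for example in the orientation of the third side or in $F_2|_5ST=-iF_3$, would show up as an apparent inconsistency. So I would first verify the $X^3$ coefficient of the $F_2$ relation, which involves only $p_3$ and $a$, before solving the remaining coefficients. I would then confirm that the leftover equation is consistent rather than forcing $R_3(0)$ to vanish.
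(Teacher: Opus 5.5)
Your argument is correct, but it relies on a different Manin relation from the one the paper uses.

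The paper's route is as follows. It uses only the $X^3$ coefficient of the $F_3$ triangle relation, to get $R_1(-1)=0$, and then never returns to that relation. For this theorem it instead derives a second triangle relation, for $F_2$, and substitutes the $S$-relation. It then reads off $3p_0=2\sqrt2\,a$, $3p_0+2p_1=\sqrt2(2+i)a$ and $p_0+p_1+p_2=\tfrac{1+i}{\sqrt2}a$ from the $X$, $X^2$ and $X^3$ coefficients (in your notation).

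Your first step already proves the whole theorem. The constant, $X$ and $X^2$ coefficients of the $F_3$ relation give
$c_3=-\tfrac{2\sqrt2\,i}{3}a$, $c_2=-\tfrac{\sqrt2}{2}a$ and $c_1=\tfrac{\sqrt2\,i}{6}a$.
Equivalently, $p_0=\tfrac{2\sqrt2}{3}a$, $p_1=\tfrac{3i}{4}p_0$ and $p_2=-\tfrac14 p_0$. Meanwhile $c_0=ip_3$ never appears, because it cancels in $R_3(X)-R_3(X+1)$.

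Your route is therefore more economical. It shows that the single relation behind the previous theorem carries all the information: its top coefficient gives $R_1(-1)=0$, and its lower coefficients give the present statement. The $F_2$ relation is reduced to a consistency check, and the paper's computation confirms that it is consistent.

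One correction to that check. The $X^3$ coefficient of the $F_2$ relation does not involve $p_3$ at all; it is $\tfrac{1+i}{\sqrt2}a-p_0-p_1-p_2$. This is because $p_3$ cancels identically between $R_2(X)$ and $iX^3R_3(-1-1/X)$. The coefficient that isolates a single unknown is the $X$ coefficient, $2\sqrt2\,a-3p_0$.

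This slip does not affect your proof. It also settles your worry about $R_3(0)$: since it drops out of both Manin relations, nothing can force it to vanish.
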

	
\begin{proof}
	Put
	\[
	A:=R_1(0),\qquad \zeta:=e^{\pi i/4}.
	\]
	We already know that
	\[
	R_1(X)=A(1-X^2)(1+iX).
	\]
	Write
	\[
	R_2(X)=c_0+c_1X+c_2X^2+c_3X^3.
	\]
	The $S$-relation
	\[
	R_3(X)=-iX^3R_2\left(-\frac{1}{X}\right)
	\]
	then gives
	\[
	R_3(X)=ic_3-ic_2X+ic_1X^2-ic_0X^3.
	\]
	
	We now use the Manin relation in the form
	\[
	R_2(X)-\zeta^{-1}R_1(X+1)+iX^3R_3\left(-1-\frac{1}{X}\right)=0.
	\]
	We expand the last two terms explicitly.
	
	First,
	\[
	R_1(X+1)=-A\left((2+2i)X+(1+3i)X^2+iX^3\right).
	\]
	Since $\zeta^{-1}=(1-i)/\sqrt{2}$, it follows that
	\[
	-\zeta^{-1}R_1(X+1)=2\sqrt{2}AX+\sqrt{2}(2+i)AX^2+\frac{1+i}{\sqrt{2}}AX^3.
	\]
	
	Next, put
	\[
	Y=-1-\frac{1}{X}=-\frac{1+X}{X}.
	\]
	Using the expression for $R_3$, we obtain
	\begin{align*}
		iX^3R_3(Y)
		&=-c_3X^3-c_2X^2(1+X)-c_1X(1+X)^2-c_0(1+X)^3\\
		&=-c_0-(3c_0+c_1)X-(3c_0+2c_1+c_2)X^2\\
		&\qquad -(c_0+c_1+c_2+c_3)X^3.
	\end{align*}
	Adding $R_2(X)$ therefore gives
	\[
	R_2(X)+iX^3R_3\left(-1-\frac{1}{X}\right)
	=-3c_0X-(3c_0+2c_1)X^2-(c_0+c_1+c_2)X^3.
	\]
	In particular, the coefficient $c_3$ cancels from the Manin relation.
	
	Substituting these expressions into the Manin relation gives
	\[
	\left(2\sqrt{2}A-3c_0\right)X
	+\left(\sqrt{2}(2+i)A-3c_0-2c_1\right)X^2
	+\left(\frac{1+i}{\sqrt{2}}A-c_0-c_1-c_2\right)X^3=0.
	\]
	Comparing coefficients of $X$, $X^2$, and $X^3$, respectively, yields
	\[
	3c_0=2\sqrt{2}A,
	\]
	\[
	3c_0+2c_1=\sqrt{2}(2+i)A,
	\]
	and
	\[
	c_0+c_1+c_2=\frac{1+i}{\sqrt{2}}A.
	\]
	
	The first equation gives
	\[
	c_0=\frac{2\sqrt{2}}{3}A.
	\]
	Substituting this into the second gives
	\[
	2c_1=\sqrt{2}iA,
	\]
	and hence
	\[
	c_1=\frac{3i}{4}c_0.
	\]
	Finally, the third equation gives
	\[
	c_2=-\frac{\sqrt{2}}{6}A=-\frac{1}{4}c_0.
	\]
	Thus
	\[
	R_2(X)=c_0\left(1+\frac{3i}{4}X-\frac{1}{4}X^2\right)+c_3X^3.
	\]
	
	The coefficient $c_3$ is not constrained by the Manin relation.  Since
	\[
	R_3(0)=ic_3,
	\]
	we have
	\[
	c_3=-iR_3(0).
	\]
	Since $c_0=R_2(0)$, we conclude that
	\[
	R_2(0)=\frac{2\sqrt{2}}{3}R_1(0)
	\]
	and
	\[
	R_2(X)=R_2(0)\left(1+\frac{3i}{4}X-\frac{1}{4}X^2\right)-iR_3(0)X^3.
	\]
	Finally, substituting this expression into the $S$-relation gives
	\[
	R_3(X)=R_3(0)+R_2(0)\left(\frac{i}{4}X-\frac{3}{4}X^2-iX^3\right).
	\]
	This proves the theorem.
\end{proof}

\subsection{Eta-product realizations and critical $L$-values}

We have now determined the period structure of the three-component theta system up to the two constants $R_1(0)$ and $R_3(0)$.  We next identify the three components with eta quotients and use an additional duplication relation to determine the remaining period ratio before passing to critical $L$-values.

Using the theta--eta identities recalled in Section~\ref{sec:preliminaries}, we have
\begin{align*}
	F_1(\tau)&=2\frac{\eta(\tau)^{38}}{\eta(\tau/2)^{14}\eta(2\tau)^{14}},\\
	F_2(\tau)&=2\frac{\eta(\tau/2)^{14}}{\eta(\tau)^4},\\
	F_3(\tau)&=256\frac{\eta(2\tau)^{14}}{\eta(\tau)^4}.
\end{align*}
Accordingly, define
\[
h(\tau):=\frac{\eta(8\tau)^{38}}{\eta(4\tau)^{14}\eta(16\tau)^{14}},\qquad
u(\tau):=\frac{\eta(4\tau)^{14}}{\eta(8\tau)^4},\qquad
v(\tau):=\frac{\eta(2\tau)^{14}}{\eta(\tau)^4}.
\]
Then
\[
F_1(8\tau)=2h(\tau),\qquad F_2(8\tau)=2u(\tau),\qquad F_3(\tau)=256v(\tau).
\]

We first complete the period structure of the system.  Let
\[
g(\tau):=\eta(\tau)^4\eta(2\tau)^2\eta(4\tau)^4.
\]
The eta-product identity used by Rogers, Wan, and Zucker \cite{MR3338042}, cited there from the collection of Somos \cite{SomosEta}, has an equivalent formulation in terms of the classical Jacobi duplication formulas.  We record this formulation because it interacts naturally with the period-polynomial relations obtained above.

Put
\[
M(\tau):=\eta(2\tau)^4\eta(4\tau)^4.
\]
Using the theta--eta identities, we obtain
\[
g(\tau)=M(\tau)\theta_4(2\tau)^2,\qquad u(\tau)=M(\tau)\theta_3(4\tau)^2,
\]
and
\[
4g(2\tau)=M(\tau)\theta_2(4\tau)^2,\qquad v(\tau)=M(\tau)\theta_3(2\tau)^2.
\]
The Jacobi duplication formulas
\[
\theta_4(2\tau)^2=\theta_3(4\tau)^2-\theta_2(4\tau)^2
\]
and
\[
\theta_3(2\tau)^2=\theta_3(4\tau)^2+\theta_2(4\tau)^2
\]
therefore give
\[
u(\tau)=g(\tau)+4g(2\tau),\qquad v(\tau)=g(\tau)+8g(2\tau).
\]

These identities have a particularly simple interpretation at the level of period polynomials.  For a weight-$5$ cusp form $f$, write
\[
r_f(X):=\int_0^{i\infty}f(\tau)(\tau-X)^3\,d\tau,
\]
and define the rescaling operator $V_2$ by
\[
(V_2f)(\tau):=f(2\tau).
\]
The change of variables $z=2\tau$ gives
\[
r_{V_2f}(X)=\int_0^{i\infty}f(2\tau)(\tau-X)^3\,d\tau=\frac{1}{16}r_f(2X).
\]
Consequently,
\[
r_u(X)=r_g(X)+\frac{1}{4}r_g(2X),\qquad r_v(X)=r_g(X)+\frac{1}{2}r_g(2X).
\]
Thus the two period polynomials $r_u$ and $r_v$ are controlled by the single period polynomial $r_g$ together with its rescaling $r_g(2X)$.

In particular,
\[
r_u(0)=\frac{5}{4}r_g(0),\qquad r_v(0)=\frac{3}{2}r_g(0),
\]
and hence
\[
r_v(0)=\frac{6}{5}r_u(0).
\]
Since
\[
F_2(8\tau)=2u(\tau),\qquad F_3(\tau)=256v(\tau),
\]
we have
\[
R_2(X)=8192\,r_u\left(\frac{X}{8}\right),\qquad R_3(X)=256\,r_v(X).
\]
It follows that
\[
R_3(0)=\frac{3}{80}R_2(0).
\]
Together with
\[
R_2(0)=\frac{2\sqrt{2}}{3}R_1(0),
\]
we obtain
\[
R_3(0)=\frac{\sqrt{2}}{40}R_1(0).
\]
Thus the three period polynomials are determined up to the single normalization $R_1(0)$.  More explicitly,
\[
R_1(X)=R_1(0)(1-X^2)(1+iX),
\]
\[
R_2(X)=R_2(0)\left(1+\frac{3i}{4}X-\frac{1}{4}X^2-\frac{3i}{80}X^3\right),
\]
and
\[
R_3(X)=R_2(0)\left(\frac{3}{80}+\frac{i}{4}X-\frac{3}{4}X^2-iX^3\right).
\]

We now translate this completed period structure into relations among critical $L$-values.  The Mellin transform gives
\[
R_1(X)=\frac{3072}{\pi^4}L(h,4)+\frac{768i}{\pi^3}L(h,3)X-\frac{96}{\pi^2}L(h,2)X^2-\frac{8i}{\pi}L(h,1)X^3,
\]
\[
R_2(X)=\frac{3072}{\pi^4}L(u,4)+\frac{768i}{\pi^3}L(u,3)X-\frac{96}{\pi^2}L(u,2)X^2-\frac{8i}{\pi}L(u,1)X^3,
\]
and
\[
R_3(X)=\frac{96}{\pi^4}L(v,4)+\frac{192i}{\pi^3}L(v,3)X-\frac{192}{\pi^2}L(v,2)X^2-\frac{128i}{\pi}L(v,1)X^3.
\]

The same rescaling relations also recover the connection with the critical values of $g$.  If
\[
r_g(X)=\sum_{j=0}^3 a_jX^j,
\]
then
\[
r_u(X)=\sum_{j=0}^3\left(1+2^{j-2}\right)a_jX^j,\qquad
r_v(X)=\sum_{j=0}^3\left(1+2^{j-1}\right)a_jX^j.
\]
Since the coefficient of $X^j$ corresponds to the critical value with $s=4-j$, it follows that, for $1\leq s\leq4$,
\[
L(u,s)=\left(1+2^{2-s}\right)L(g,s),\qquad
L(v,s)=\left(1+2^{3-s}\right)L(g,s).
\]
Thus the corresponding $L$-function identities arise coefficientwise from the rescaling relations for the period polynomials.

\begin{corollary}
	The critical values associated with $h$, $u$, and $v$ satisfy
	\[
	L(h,4)=\frac{\pi}{4}L(h,3)=\frac{\pi^2}{32}L(h,2)=\frac{\pi^3}{384}L(h,1),
	\]
	\[
	L(u,4)=\frac{\pi}{3}L(u,3)=\frac{\pi^2}{8}L(u,2)=\frac{5\pi^3}{72}L(u,1),
	\]
	and
	\[
	L(v,4)=\frac{3\pi}{10}L(v,3)=\frac{\pi^2}{10}L(v,2)=\frac{\pi^3}{20}L(v,1).
	\]
	Moreover,
	\[
	L(u,4)=\frac{2\sqrt{2}}{3}L(h,4),
	\]
	and
	\[
	L(v,4)=\frac{6}{5}L(u,4).
	\]
	The critical values of $g$ satisfy
	\[
	L(g,4)=\frac{2\pi}{5}L(g,3)=\frac{\pi^2}{5}L(g,2)=\frac{\pi^3}{6}L(g,1).
	\]
	In particular,
	\[
	L(h,4)=\frac{15}{8\sqrt{2}}L(g,4).
	\]
\end{corollary}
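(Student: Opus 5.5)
The plan is to compare coefficients. On one side are the explicit period polynomials $R_1,R_2,R_3$ obtained above, each determined up to the single normalization $R_1(0)$. On the other side are their Mellin expansions in terms of critical values of $h,u,v$. Write $A=R_1(0)$ and $B=R_2(0)$.

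\textbf{Relations for $h$.} Expanding gives
\[
R_1(X)=A(1+iX-X^2-iX^3).
\]
Matching with the expansion of $R_1$ in terms of $L(h,s)$ yields
\[
\frac{3072}{\pi^4}L(h,4)=\frac{768}{\pi^3}L(h,3)=\frac{96}{\pi^2}L(h,2)=\frac{8}{\pi}L(h,1)=A.
\]
Dividing through produces the chain $L(h,4)=\frac{\pi}{4}L(h,3)=\frac{\pi^2}{32}L(h,2)=\frac{\pi^3}{384}L(h,1)$.

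\textbf{Relations for $u$.} Use the completed form
\[
R_2(X)=B\left(1+\tfrac{3i}{4}X-\tfrac14X^2-\tfrac{3i}{80}X^3\right).
\]
Matching the $X^0$ and $X^1$ coefficients gives $\frac{3072}{\pi^4}L(u,4)=B$ and $\frac{768}{\pi^3}L(u,3)=\frac34B$, so $L(u,4)=\frac{\pi}{3}L(u,3)$. The $X^2$ coefficient gives $\frac{96}{\pi^2}L(u,2)=\frac14B$, so $L(u,4)=\frac{\pi^2}{8}L(u,2)$. The $X^3$ coefficient gives $\frac{8}{\pi}L(u,1)=\frac{3}{80}B$, so $L(u,4)=\frac{5\pi^3}{72}L(u,1)$.

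\textbf{Relations for $v$.} Use $R_3(X)=B\bigl(\frac3{80}+\frac i4X-\frac34X^2-iX^3\bigr)$ and match it against $\frac{96}{\pi^4}L(v,4)+\frac{192i}{\pi^3}L(v,3)X-\cdots$. Each $L(v,s)$ becomes a multiple of $B$, and taking ratios gives the stated chain. For example, $L(v,4)=\frac{\pi^4}{2560}B$ and $L(v,3)=\frac{\pi^3}{768}B$, whose ratio is $\frac{3\pi}{10}$. The $X^2$ and $X^3$ coefficients are handled the same way.

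\textbf{Cross relations.} The relation $L(u,4)=\frac{2\sqrt2}{3}L(h,4)$ follows because the constant terms of $R_1$ and $R_2$ carry the same factor $3072/\pi^4$, together with $R_2(0)=\frac{2\sqrt2}{3}R_1(0)$. The relation $L(v,4)=\frac65L(u,4)$ is read off from $L(v,4)=\frac{\pi^4}{2560}B$ and $L(u,4)=\frac{\pi^4}{3072}B$. It is also consistent with the coefficientwise identities $L(u,s)=(1+2^{2-s})L(g,s)$ and $L(v,s)=(1+2^{3-s})L(g,s)$ at $s=4$.

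\textbf{Relations for $g$.} Substitute $L(g,s)=L(u,s)/(1+2^{2-s})$ into the $u$-chain. The divisors are $\frac54,\frac32,2,3$ for $s=4,3,2,1$. This gives $L(g,4)=\frac{2\pi}{5}L(g,3)=\frac{\pi^2}{5}L(g,2)=\frac{\pi^3}{6}L(g,1)$. As a cross-check, the same chain should come out of the $v$-chain using the divisors $\frac32,2,3,5$.

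\textbf{Final identity.} Finally,
\[
L(h,4)=\frac{3}{2\sqrt2}\,L(u,4)=\frac{3}{2\sqrt2}\cdot\frac54\,L(g,4)=\frac{15}{8\sqrt2}\,L(g,4).
\]

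\textbf{Main obstacle.} The argument contains no conceptual difficulty; the risk is entirely in the bookkeeping. One must use the correct Mellin normalizations, including the factors $8^j$ coming from $R_2(X)=8192\,r_u(X/8)$ and the factor $256$ in $R_3$. One must also track the signs of the powers of $i$ in each coefficient. I would verify each coefficient against the general formula $i^{j+1}\Gamma(j+1)(2\pi)^{-j-1}(-1)^{3-j}\binom3j$.
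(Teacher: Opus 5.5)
Your proposal is correct and follows the paper's own proof. It compares the coefficients of the completed $R_1,R_2,R_3$ with their Mellin expansions, reads the cross relations off $R_2(0)=\frac{2\sqrt2}{3}R_1(0)$ and $R_3(0)=\frac{3}{80}R_2(0)$, and obtains the $g$-chain by substituting $L(u,s)=(1+2^{2-s})L(g,s)$. The intermediate constants you record, such as $L(u,4)=\frac{\pi^4}{3072}B$ and $L(v,4)=\frac{\pi^4}{2560}B$, and your $v$-based cross-check with divisors $\frac32,2,3,5$, are all correct.
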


\begin{proof}
	Comparing the coefficients of
	\[
	R_1(X)=R_1(0)(1-X^2)(1+iX)
	\]
	with its Mellin expansion gives the first chain.
	
	For $R_2$, comparison with
	\[
	R_2(X)=R_2(0)\left(1+\frac{3i}{4}X-\frac{1}{4}X^2-\frac{3i}{80}X^3\right)
	\]
	gives
	\[
	L(u,4)=\frac{\pi}{3}L(u,3)=\frac{\pi^2}{8}L(u,2)=\frac{5\pi^3}{72}L(u,1).
	\]
	Similarly, comparison with
	\[
	R_3(X)=R_2(0)\left(\frac{3}{80}+\frac{i}{4}X-\frac{3}{4}X^2-iX^3\right)
	\]
	gives
	\[
	L(v,4)=\frac{3\pi}{10}L(v,3)=\frac{\pi^2}{10}L(v,2)=\frac{\pi^3}{20}L(v,1).
	\]
	
	The relation
	\[
	R_2(0)=\frac{2\sqrt{2}}{3}R_1(0)
	\]
	gives
	\[
	L(u,4)=\frac{2\sqrt{2}}{3}L(h,4),
	\]
	while
	\[
	R_3(0)=\frac{3}{80}R_2(0)
	\]
	gives
	\[
	L(v,4)=\frac{6}{5}L(u,4).
	\]
	
	Finally, substituting
	\[
	L(u,s)=\left(1+2^{2-s}\right)L(g,s)
	\]
	into the relations for $u$ gives
	\[
	\frac{5}{4}L(g,4)=\frac{\pi}{2}L(g,3)=\frac{\pi^2}{4}L(g,2)=\frac{5\pi^3}{24}L(g,1),
	\]
	and hence
	\[
	L(g,4)=\frac{2\pi}{5}L(g,3)=\frac{\pi^2}{5}L(g,2)=\frac{\pi^3}{6}L(g,1).
	\]
	The final relation follows from
	\[
	\frac{5}{4}L(g,4)=L(u,4)=\frac{2\sqrt{2}}{3}L(h,4).
	\]
\end{proof}

The $S$-relation between $R_2$ and $R_3$ is reflected in the complementary critical-value identities
\[
L(u,4)=\frac{\pi^3}{24}L(v,1),\qquad L(u,3)=\frac{\pi}{4}L(v,2),
\]
and
\[
L(v,3)=\frac{\pi}{2}L(u,2),\qquad L(v,4)=\frac{\pi^3}{12}L(u,1).
\]
Thus the eta-product realization preserves both the coupled $S$-symmetry of the original theta system and the additional rescaling structure arising from the Jacobi duplication identities.		

\subsection{An intrinsic period polynomial for $g$}

The preceding analysis determines the period polynomial of $g$ indirectly through the auxiliary forms $u$ and $v$.  We now give a complementary derivation directly from $g$ itself.

Recall that
\[
g(\tau)=\eta(\tau)^4\eta(2\tau)^2\eta(4\tau)^4.
\]
It is convenient to rescale and put
\[
G_0(\tau):=g\left(\frac{\tau}{2}\right)
=\eta(\tau/2)^4\eta(\tau)^2\eta(2\tau)^4.
\]
A Gaussian theta series representation of this form, going back to Glaisher \cite{glaisher1907representations}, is

\[
G_0(\tau)
=
\frac{1}{4}\sum_{m,n\in\mathbb Z}
(n-im)^4e^{\pi i\tau(n^2+m^2)}.
\]
Define the companion theta series
\[
G_1(\tau)
:=
\frac{1}{4}\sum_{m,n\in\mathbb Z}
(-1)^{m+n}(n-im)^4e^{\pi i\tau(n^2+m^2)}.
\]
Let
\[
P_j(X):=\int_0^{i\infty}G_j(\tau)(\tau-X)^3\,d\tau,
\qquad j=0,1,
\]
and write
\[
P_g(X):=P_0(X).
\]
Since $G_0(\tau)=g(\tau/2)$, the change of variables $\tau=2z$ gives
\[
P_g(X)=16r_g\left(\frac{X}{2}\right).
\]
Thus determining $P_g$ is equivalent to determining the intrinsic period polynomial $r_g$.

\begin{theorem}
	The period polynomial associated with $G_0(\tau)=g(\tau/2)$ is
	\[
	P_g(X)=\frac{P_g(0)}{2}\left(2+5iX-5X^2-2iX^3\right).
	\]
	Equivalently,
	\[
	r_g(X)=r_g(0)\left(1+5iX-10X^2-8iX^3\right).
	\]
\end{theorem}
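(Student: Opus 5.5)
The plan is to mimic the proof for $R_1$ and $R_2,R_3$ by placing $G_0$ inside a small slash-stable system of Gaussian theta series and extracting Manin relations. Since $(-1)^{n^2+m^2}=(-1)^{n+m}$, we get $G_0(\tau+1)=G_1(\tau)$ and $G_1(\tau+1)=G_0(\tau)$. For the $S$-action we apply Poisson summation on $\ZZ^2$ with the harmonic polynomial $(n-im)^4$ of degree $4$. The form $n^2+m^2$ is unimodular, so $G_0$ is an eigenform: $G_0|_5S=\varepsilon G_0$, where the root of unity $\varepsilon$ is computed from the Poisson factor and should be $-i$, just as for $F_1$. The same computation sends $G_1$ to the shifted series
\[
G_2(\tau):=\tfrac14\sum_{m,n\in\ZZ}\bigl((n+\tfrac12)-i(m+\tfrac12)\bigr)^4e^{\pi i\tau((n+\frac12)^2+(m+\frac12)^2)}.
\]
This gives $G_1|_5S=\varepsilon' G_2$ and $G_2|_5S=\varepsilon' G_1$, with $G_2|_5T=\zeta' G_2$ for an explicit root of unity $\zeta'$ (here $e^{\pi i/2}$, since the norms lie in $\tfrac12+\ZZ$). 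Thus $(G_0,G_1,G_2)$ has exactly the shape of $(F_1,F_2,F_3)$, except that the $T$-multiplier on the third component need not be $1$.

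First, the substitution $\tau=-1/z$ and $G_0|_5S=-iG_0$ give $P_0(X)=-iX^3P_0(-1/X)$. As in the proof for $R_1$, this forces
\[
P_0(X)=a+bX+ibX^2-iaX^3,
\]
and the target formula is exactly the case $b=\tfrac{5i}{2}a$. The analogous relation between $P_1$ and $P_2$, say $P_2(X)=\varepsilon'X^3P_1(-1/X)$, leaves four unknown coefficients $c_0,\dots,c_3$ for $P_1$.

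Next, we integrate $G_1(\tau)(\tau-X)^3\,d\tau$ and $G_2(\tau)(\tau-X)^3\,d\tau$ around the ideal triangle $0\to i\infty\to-1\to0$, exactly as before, using $\tau=z-1$ on the second side and $\tau=(ST)z$ on the third. For $G_1$ the second side gives $-P_0(X+1)$ and the third gives a constant times $X^3P_2(-1-1/X)$. For $G_2$ the second side gives $-\zeta'^{-1}P_2(X+1)$, and the third side involves $G_2|_5ST\propto G_1|_5T=G_0$, giving a multiple of $X^3P_0(-1-1/X)$. Writing everything in terms of $a,b,c_0,\dots,c_3$ and comparing coefficients of $X^0,\dots,X^3$ gives an overdetermined linear system. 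I then solve it for $b/a$, expecting $b=\tfrac{5i}{2}a$. The equivalent form for $r_g$ then follows from $P_g(X)=16r_g(X/2)$: the coefficients $2,5i,-5,-2i$ scale by $2^{-j}$, giving $1+5iX-10X^2-8iX^3$ after normalization.

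The main obstacle is the second step. Because $\zeta'\ne1$, the slick degree argument used for $R_1(-1)=0$ is unavailable. One must carefully track the roots of unity $\varepsilon,\varepsilon',\zeta'$ from Poisson summation and check that the coupled system really pins down $b/a$ rather than leaving a free parameter, in the way $c_3$ was left free for $R_2$. If the two triangle relations turn out to be insufficient, I would add the relation coming from $G_0|_5T^2=G_0$, or the triangle with vertices $0,i\infty,1$. As an independent check, the result must agree with the corollary $L(g,4)=\tfrac{2\pi}{5}L(g,3)=\tfrac{\pi^2}{5}L(g,2)=\tfrac{\pi^3}{6}L(g,1)$. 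Inserting these values into the Mellin expansion of $r_g$ indeed gives coefficient ratios $1:5i:-10:-8i$, which confirms the normalization.
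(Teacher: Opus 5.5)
\textbf{There is a genuine gap: the $S$- and $T$-relations for $(G_0,G_1,G_2)$ cannot determine $b/a$.} Your setup is correct. Poisson summation gives $G_1|_5S=-iG_2$, $G_2|_5S=-iG_1$ and $G_2|_5T=iG_2$, and the three triangle relations are as you describe. But the resulting linear system leaves $b/a$ free, and this is a structural obstruction, not a bookkeeping issue. Write $P_0(X)=a+bX+ibX^2-iaX^3$ and expand all twelve coefficient equations. They reduce exactly to
\[
P_1(X)=(a+ib)-2bX-3ibX^2+2bX^3,\qquad P_2(X)=2ib-3bX-2ibX^2+(b-ia)X^3,
\]
with $a$ and $b$ both free.

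In particular, $(P_0,P_1,P_2)=(1-iX^3,\,1,\,-iX^3)$ satisfies every $S$-relation and every triangle relation. It is the trivial (coboundary) solution attached to the $T$-invariant combination $G_0+G_1$, the analogue of $X^{\kappa-2}-1$ in level-one Eichler--Shimura theory. Adding multiples of it to the true period vector changes $b/a$ while preserving every relation built from $S$ and $T$. So your fallbacks ($G_0|_5T^2=G_0$, or the triangle $0,i\infty,1$) cannot help, because they are consequences of the same cocycle relations. This also explains why the degree argument worked for $R_1$. In the $F$-system the $T$-fixed component is $F_3$, so the trivial solution is $(0,-iX^3,1)$ and avoids the first slot; it is exactly the free parameter $R_3(0)$ in the second theorem. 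Here the trivial solution hits $P_0$.

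\textbf{What is missing is a relation from outside $\mathrm{SL}_2(\mathbb{Z})$.} The paper uses the index-two sublattice identity $G_0+G_1=-8G_0(2\tau)$. It comes from restricting the sum to $(1+i)\mathbb{Z}[i]$: the norm doubles and $(1-i)^4=-4$. This gives
\[
P_1(X)=-P_0(X)-\tfrac12P_0(2X)=-\tfrac32a-2bX-3ibX^2+5iaX^3 .
\]
The trivial solution violates this relation. Matching it against the family above (for instance $a+ib=-\tfrac32a$) forces $b=\tfrac{5i}{2}a$. The paper reaches the same conclusion from the constant term of the $G_0$-triangle alone, so $G_2$ is never needed.

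Your closing check against the earlier corollary for $L(g,s)$ would also prove the statement if you used it as input. But then it is not the intrinsic derivation this theorem is meant to supply, and the Manin-relation part of your plan does no work.
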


\begin{proof}
	We first determine the action of $S$ on $G_0$ directly from the transformation law of the Dedekind eta function.  Since
	\[
	G_0(\tau)=\eta(\tau/2)^4\eta(\tau)^2\eta(2\tau)^4,
	\]
	we have
	\[
	G_0\left(-\frac{1}{\tau}\right)
	=
	\eta\left(-\frac{1}{2\tau}\right)^4
	\eta\left(-\frac{1}{\tau}\right)^2
	\eta\left(-\frac{2}{\tau}\right)^4.
	\]
	Using
	\[
	\eta\left(-\frac{1}{\tau}\right)=\sqrt{-i\tau}\,\eta(\tau),
	\]
	in the three forms
	\[
	\eta\left(-\frac{1}{2\tau}\right)=\sqrt{-2i\tau}\,\eta(2\tau),
	\qquad
	\eta\left(-\frac{1}{\tau}\right)=\sqrt{-i\tau}\,\eta(\tau),
	\]
	and
	\[
	\eta\left(-\frac{2}{\tau}\right)
	=
	\sqrt{-\frac{i\tau}{2}}\,\eta\left(\frac{\tau}{2}\right),
	\]
	we obtain
	\[
	G_0\left(-\frac{1}{\tau}\right)
	=
	(-2i\tau)^2(-i\tau)\left(-\frac{i\tau}{2}\right)^2
	G_0(\tau)
	=
	(-i\tau)^5G_0(\tau).
	\]
	Thus
	\[
	G_0|_5S=-iG_0.
	\]
	
	Substituting $\tau=-1/z$ in the period integral therefore gives
	\[
	P_0(X)=-iX^3P_0\left(-\frac{1}{X}\right).
	\]
	Write
	\[
	P_0(X)=a+bX+cX^2+dX^3.
	\]
	Comparison of coefficients gives
	\[
	c=ib,\qquad d=-ia,
	\]
	and hence
	\[
	P_0(X)=a+bX+ibX^2-iaX^3.
	\]
	
	We next use the Gaussian theta series representation.  Since
	\[
	e^{\pi i(n^2+m^2)}=(-1)^{n^2+m^2}=(-1)^{n+m},
	\]
	translation by $1$ gives
	\[
	G_0(\tau+1)=G_1(\tau).
	\]
	Similarly,
	\[
	G_1(\tau+1)=G_0(\tau).
	\]
	
The Gaussian theta-series representation also yields
\[
G_0(\tau)+G_1(\tau)=-8G_0(2\tau).
\]
Indeed, $1+(-1)^{m+n}$ restricts the Gaussian sum to the index two
sublattice $(1+i)\mathbb Z[i]\subset\mathbb Z[i]$; under the corresponding
change of variables the norm is multiplied by $2$ and the quartic factor by
$(1-i)^4=-4$, giving the stated identity.

%
Passing to period polynomials therefore gives
\[
P_1(X)=-P_0(X)-\frac12P_0(2X).
\]
	Since
	\[
	P_0(X)=a+bX+ibX^2-iaX^3,
	\]
	this gives
	\[
	P_1(X)
	=
	-\frac{3}{2}a-2bX-3ibX^2+5iaX^3.
	\]
	
	It remains to determine the ratio $b/a$.  We apply the same ideal-triangle argument used above, integrating
	\[
	G_0(\tau)(\tau-X)^3\,d\tau
	\]
	around the positively oriented triangle with vertices $0$, $i\infty$, and $-1$.
	
	The side from $0$ to $i\infty$ contributes
	\[
	P_0(X).
	\]
	On the side from $i\infty$ to $-1$, put $\tau=z-1$.  Since
	\[
	G_0(z-1)=G_1(z),
	\]
	this side contributes
	\[
	-P_1(X+1).
	\]
	For the third side, put
	\[
	\tau=-\frac{1}{z+1}.
	\]
	Using the $S$- and $T$-relations,
	\[
	G_0\left(-\frac{1}{z+1}\right)
	=
	(-i(z+1))^5G_0(z+1)
	=
	-i(z+1)^5G_1(z).
	\]
	Therefore the third side contributes
	\[
	iX^3P_1\left(-1-\frac{1}{X}\right).
	\]
	Cauchy's theorem gives the Manin relation
	\[
	P_0(X)-P_1(X+1)
	+iX^3P_1\left(-1-\frac{1}{X}\right)=0.
	\]
	
Taking the constant term in the Manin relation gives
\[
a+\left(\frac{3}{2}a+2b+3ib-5ia\right)+5a=0,
\]
and hence
\[
b=\frac{5i}{2}a.
\]
Therefore
\[
P_0(X)=a\left(1+\frac{5i}{2}X-\frac{5}{2}X^2-iX^3\right).
\]
Since $a=P_0(0)=P_g(0)$, we obtain
\[
P_g(X)=\frac{P_g(0)}{2}\left(2+5iX-5X^2-2iX^3\right).
\]
	
	Finally,
	\[
	P_g(X)=16r_g\left(\frac{X}{2}\right),
	\]
	so
	\[
	r_g(X)=\frac{1}{16}P_g(2X).
	\]
	Since $P_g(0)=16r_g(0)$, we obtain
	\[
	r_g(X)
	=
	r_g(0)\left(1+5iX-10X^2-8iX^3\right).
	\]
\end{proof}

The preceding subsection already translated this period polynomial into the corresponding relations among the critical values of $g$.  The present argument gives an intrinsic derivation: the $S$ and $T$ symmetries determine the period polynomial up to two coefficients, while the parity decomposition of the Gaussian theta series supplies the additional level $2$ relation that fixes their ratio.

We next translate the completed period relations into identities among moments of complete elliptic integrals.
\subsection{Elliptic integral consequences}

We now translate the critical-value relations obtained above into identities among moments of complete elliptic integrals.  We first recall the elliptic-integral representations of Rogers, Wan, and Zucker \cite{MR3338042} for the critical values of
\[
g(\tau)=\eta(\tau)^4\eta(2\tau)^2\eta(4\tau)^4:
\]
\[
30L(g,4)=\int_0^1 K'(k)^3\,dk,
\]
\[
2\pi L(g,3)=\int_0^1 kK(k)K'(k)^2\,dk,
\]
\[
\pi^2L(g,2)=\int_0^1 kK(k)^2K'(k)\,dk,
\]
and
\[
\pi^3L(g,1)=\int_0^1 kK(k)^3\,dk.
\]
Hence the critical-value relation
\[
L(g,4)=\frac{2\pi}{5}L(g,3)=\frac{\pi^2}{5}L(g,2)=\frac{\pi^3}{6}L(g,1)
\]
recovers the known cubic moment identity
\[
\int_0^1 K'(k)^3\,dk
=6\int_0^1 kK(k)K'(k)^2\,dk
=6\int_0^1 kK(k)^2K'(k)\,dk
=5\int_0^1 kK(k)^3\,dk
\]
studied by Wan \cite{MR2845511} and by Rogers, Wan, and Zucker \cite{MR3338042}. An analytic proof of the corresponding cubic-moment relation was later given by Zhou \cite{MR3231318}.

The critical values associated with $u$ and $v$ admit analogous direct elliptic integral representations.  Put
\[
t=\frac{K'(k)}{K(k)},\qquad \tau=it.
\]
Then
\[
\frac{dt}{dk}=-\frac{\pi}{2k(1-k^2)K(k)^2},
\]
and the theta parametrization gives
\[
F_2(it)=\left(\frac{2K(k)}{\pi}\right)^5\sqrt{k}(1-k^2)^2,
\]
and
\[
F_3(it)=\left(\frac{2K(k)}{\pi}\right)^5k^4(1-k^2)^{1/4}.
\]

	Combining these formulas with the Mellin transform yields
	\begin{align*}
		192L(u,4)&=\int_0^1\frac{1-k^2}{\sqrt{k}}K'(k)^3\,dk,\\
		16\pi L(u,3)&=\int_0^1\frac{1-k^2}{\sqrt{k}}K(k)K'(k)^2\,dk,\\
		2\pi^2L(u,2)&=\int_0^1\frac{1-k^2}{\sqrt{k}}K(k)^2K'(k)\,dk,\\
		\frac{\pi^3}{2}L(u,1)&=\int_0^1\frac{1-k^2}{\sqrt{k}}K(k)^3\,dk,
	\end{align*}
	and
	\begin{align*}
		6L(v,4)&=\int_0^1\frac{k^3}{(1-k^2)^{3/4}}K'(k)^3\,dk,\\
		4\pi L(v,3)&=\int_0^1\frac{k^3}{(1-k^2)^{3/4}}K(k)K'(k)^2\,dk,\\
		4\pi^2L(v,2)&=\int_0^1\frac{k^3}{(1-k^2)^{3/4}}K(k)^2K'(k)\,dk,\\
		8\pi^3L(v,1)&=\int_0^1\frac{k^3}{(1-k^2)^{3/4}}K(k)^3\,dk.
	\end{align*}
	
Using the complete critical-value relations obtained above, these representations give
\[
\begin{aligned}
	\int_0^1 \frac{1-k^2}{\sqrt{k}}K'(k)^3\,dk
	&=4\int_0^1 \frac{1-k^2}{\sqrt{k}}K(k)K'(k)^2\,dk\\
	&=12\int_0^1 \frac{1-k^2}{\sqrt{k}}K(k)^2K'(k)\,dk\\
	&=\frac{80}{3}\int_0^1 \frac{1-k^2}{\sqrt{k}}K(k)^3\,dk,
\end{aligned}
\]
and
\[
\begin{aligned}
	\int_0^1 \frac{k^3}{(1-k^2)^{3/4}}K(k)^3\,dk
	&=4\int_0^1 \frac{k^3}{(1-k^2)^{3/4}}K(k)^2K'(k)\,dk\\
	&=12\int_0^1 \frac{k^3}{(1-k^2)^{3/4}}K(k)K'(k)^2\,dk\\
	&=\frac{80}{3}\int_0^1 \frac{k^3}{(1-k^2)^{3/4}}K'(k)^3\,dk.
\end{aligned}
\]
The complementary-modulus substitution $k\mapsto \sqrt{1-k^2}$ interchanges
the two families, reflecting the $S$-symmetry exchanging the $F_2$ and $F_3$
components.

Taken together, these identities show that the three-component theta system provides a common framework for the period relations associated with $h$ and $g$ and for the corresponding families of elliptic integral moment identities.

\section{A pair of weight-$4$ eta quotients}\label{weight-4}
	
	Rogers, Wan, and Zucker \cite{MR3338042} considered the two weight-$4$ cusp forms
	\[
	f_1(\tau):=\frac{\eta(4\tau)^{16}}{\eta(2\tau)^4\eta(8\tau)^4},
	\qquad
	f_2(\tau):=\eta(2\tau)^4\eta(4\tau)^4,
	\]
	and obtained relations between their critical $L$-values.  We give a period-polynomial and modular-symbol explanation of these relations.
	
	Set
	\[
	F(\tau):=f_1(\tau),\qquad G(\tau):=f_2(2\tau),
	\]
	and let
	\[
	\sigma=
	\begin{pmatrix}
		1&0\\
		4&1
	\end{pmatrix}.
	\]
	Thus
	\[
	\sigma\tau=\frac{\tau}{4\tau+1},
	\qquad
	\sigma(0)=0,\qquad
	\sigma(i\infty)=\frac14.
	\]
	
	We shall also use the elementary eta identity
	\[
	\eta\left(\tau+\frac12\right)
	=
	e^{\pi i/24}\frac{\eta(2\tau)^3}{\eta(\tau)\eta(4\tau)},
	\]
	which follows directly from the infinite product for $\eta$.
	
	\begin{proposition}
		Under the weight-$4$ slash operator,
		\[
		F|_4\sigma=-4iG,
		\qquad
		G|_4\sigma=-\frac{i}{4}F.
		\]
	\end{proposition}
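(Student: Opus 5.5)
The plan is to reduce both identities to the full-level theta transformation laws of Section~\ref{sec:preliminaries}. First I will write $F$ and $G$ as theta monomials evaluated at $w=4\tau$. From the theta--eta identities at argument $4\tau$,
\[
\theta_2(4\tau)=2\frac{\eta(8\tau)^2}{\eta(4\tau)},\qquad
\theta_3(4\tau)=\frac{\eta(4\tau)^5}{\eta(2\tau)^2\eta(8\tau)^2},\qquad
\theta_4(4\tau)=\frac{\eta(2\tau)^2}{\eta(4\tau)},
\]
it follows that $\theta_2\theta_3\theta_4(4\tau)=2\eta(4\tau)^3$. Hence
\[
F(\tau)=\theta_3(4\tau)^2\eta(4\tau)^6=\tfrac14\,\theta_2^2\theta_3^4\theta_4^2(4\tau).
\]
Similarly, using $\eta(8\tau)^2=\tfrac12\theta_2(4\tau)\eta(4\tau)$,
\[
G(\tau)=\eta(4\tau)^4\eta(8\tau)^4=\tfrac1{16}\,\theta_2^4\theta_3^2\theta_4^2(4\tau).
\]
Alternatively, the eta identity for $\eta(\tau+\tfrac12)$ recorded just before the proposition could be used directly, but the theta route seems cleanest.

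Next I will observe that $4\sigma\tau=\frac{4\tau}{4\tau+1}=\frac{w}{w+1}$, and that
\[
\frac{w}{w+1}=S\,T^{-1}S\,w ,
\]
that is, $w\mapsto -1/w\mapsto -1/w-1\mapsto w/(w+1)$. The automorphy factor $4\tau+1=w+1$ in $|_4\sigma$ matches the factor for $w\mapsto w/(w+1)$, so the weight-$4$ slash in $\tau$ corresponds exactly to the weight-$4$ action of $ST^{-1}S$ on functions of $w$. I will then push the monomials through the three steps using the tabulated laws: $S$ swaps $\theta_2\leftrightarrow\theta_4$ and fixes $\theta_3$, each with a factor $(-iw)^{1/2}$; $T^{-1}$ swaps $\theta_3\leftrightarrow\theta_4$ and sends $\theta_2\mapsto e^{-\pi i/4}\theta_2$. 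For $\theta_2^2\theta_3^4\theta_4^2$ this gives
\[
\theta_2^2\theta_3^4\theta_4^2\ \xrightarrow{\,S\,}\ \theta_4^2\theta_3^4\theta_2^2\ \xrightarrow{\,T^{-1}\,}\ e^{-\pi i/2}\,\theta_2^2\theta_4^4\theta_3^2\ \xrightarrow{\,S\,}\ \theta_2^4\theta_3^2\theta_4^2 .
\]
So $F|_4\sigma$ is a constant multiple of $G$. The normalizations $\tfrac14$ versus $\tfrac1{16}$ supply the factor $4$, and the phase comes from $e^{-\pi i/2}$ together with the automorphy factors. Running the same chain on $\theta_2^4\theta_3^2\theta_4^2$ returns $\theta_2^2\theta_3^4\theta_4^2$, with factor $\tfrac14$ in place of $4$. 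As a consistency check, since $\sigma^2=\begin{pmatrix}1&0\\8&1\end{pmatrix}$, the product of the two constants, $(-4i)(-i/4)=-1$, should equal the eigenvalue of $F$ under $|_4\sigma^2$.

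The main obstacle is bookkeeping the constant exactly. Each theta factor contributes a half-integral power such as $(-iw)^{1/2}$ or $(-i(-1/w-1))^{1/2}$. I will combine these into integral powers before simplifying: there are eight theta factors, giving four full powers per $S$-step. I will then rewrite $(-iw)^4\,(-i(-1-1/w))^4$ as $(w+1)^4$ times an explicit root of unity. Combining that root of unity with $e^{-\pi i/2}$ from the $T^{-1}$ step should yield $-i$, and hence $-4i$ and $-i/4$ after the normalizations. Because only even total multiplicities of half-powers occur, no branch ambiguity survives, but I will verify the sign carefully, perhaps numerically at $\tau=i$ as a sanity check.
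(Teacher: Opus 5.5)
Your proof is correct, but it takes a different route from the paper.

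The paper works entirely with eta products. It sets $w=4\tau+1$, so that $4\sigma\tau=1-1/w$. It then applies the $S$- and $T$-laws for $\eta$ to $\eta(4\sigma\tau)$ and $\eta(8\sigma\tau)$. It invokes the half-translation identity for $\eta(\tau+\tfrac12)$ twice: once to evaluate $\eta(2\sigma\tau)=\eta\bigl(\tfrac12-\tfrac1{2w}\bigr)$, and once to turn $\eta(w/2)=\eta\bigl(2\tau+\tfrac12\bigr)$ back into $F$. The final phase is assembled from $24$th roots of unity.

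You instead conjugate by $\tau\mapsto4\tau$ to $\begin{pmatrix}1&0\\1&1\end{pmatrix}=-ST^{-1}S$, where the sign is invisible in weight $4$, and use only the level-one theta laws. Your bookkeeping checks out:
\begin{itemize}
\item the two $S$-steps contribute $(-i)^8w^4z^4=(w+1)^4$ with $z=-1-1/w$;
\item the only phase is $e^{-\pi i/2}$ from $T^{-1}$;
\item $\theta_2^2\theta_3^4\theta_4^2$ and $\theta_2^4\theta_3^2\theta_4^2$ are each sent to $-i$ times the other.
\end{itemize}
With the normalizations $\tfrac14$ and $\tfrac1{16}$, this gives exactly $-4iG$ and $-\tfrac i4F$.

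Your route makes the exchange $F\leftrightarrow G$ transparent: on weight-$4$ theta monomials, $ST^{-1}S$ simply interchanges the exponents of $\theta_2$ and $\theta_3$ and fixes that of $\theta_4$. It also ties this example to the theta-system viewpoint of Section~\ref{three-theta}, with far less phase bookkeeping.

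The paper's route has a modest advantage of its own. It exercises the half-translation identity that it reuses immediately afterwards to obtain $F\bigl(\tfrac{z+1}{4}\bigr)=if_2\bigl(\tfrac z4\bigr)$ in the modular-symbol decomposition.
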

	
	\begin{proof}
		Put
		\[
		w=4\tau+1.
		\]
		Then
		\[
		4\sigma\tau=1-\frac1w,\qquad
		2\sigma\tau=\frac12-\frac{1}{2w},\qquad
		8\sigma\tau=2-\frac{2}{w}.
		\]
		Using
		\[
		\eta\left(-\frac1\tau\right)=\sqrt{-i\tau}\,\eta(\tau),
		\qquad
		\eta(\tau+1)=e^{\pi i/12}\eta(\tau),
		\]
		together with the half-translation identity above, we obtain
		\[
		\eta(4\sigma\tau)
		=
		e^{\pi i/12}\sqrt{-iw}\,\eta(w),
		\]
		\[
		\eta(8\sigma\tau)
		=
		e^{\pi i/6}\sqrt{-\frac{iw}{2}}\,\eta\left(\frac w2\right),
		\]
		and
		\[
		\eta(2\sigma\tau)
		=
		e^{\pi i/24}\sqrt{-iw}\,
		\frac{\eta(w)^3}{\eta(w/2)\eta(2w)}.
		\]
		Substituting these expressions into $F(\sigma\tau)$ gives
		\[
		F(\sigma\tau)
		=
		4iw^4\eta(w)^4\eta(2w)^4.
		\]
		Since
		\[
		\eta(w)^4\eta(2w)^4
		=
		-\eta(4\tau)^4\eta(8\tau)^4
		=
		-G(\tau),
		\]
		we obtain
		\[
		F(\sigma\tau)=-4iw^4G(\tau),
		\]
		and hence
		\[
		F|_4\sigma=-4iG.
		\]
		
		Similarly,
		\[
		G(\sigma\tau)
		=
		-\frac{w^4}{4}\eta(w)^4\eta\left(\frac w2\right)^4.
		\]
		Now
		\[
		\eta\left(\frac w2\right)
		=
		\eta\left(2\tau+\frac12\right)
		=
		e^{\pi i/24}
		\frac{\eta(4\tau)^3}{\eta(2\tau)\eta(8\tau)},
		\]
		and therefore
		\[
		\eta(w)^4\eta\left(\frac w2\right)^4
		=
		i\frac{\eta(4\tau)^{16}}{\eta(2\tau)^4\eta(8\tau)^4}
		=
		iF(\tau).
		\]
		Thus
		\[
		G(\sigma\tau)
		=
		-\frac{i}{4}w^4F(\tau),
		\]
		which gives
		\[
		G|_4\sigma=-\frac{i}{4}F.
		\]
	\end{proof}
	
	Define the period polynomials
	\[
	R_j(X):=\int_0^{i\infty}f_j(\tau)(\tau-X)^2\,d\tau,
	\qquad j=1,2.
	\]
	Before using the modular-symbol decomposition, we record the reflection relations coming from the Fricke involutions.  Directly from the transformation law of $\eta$,
	\[
	f_1\left(-\frac{1}{16\tau}\right)=256\tau^4f_1(\tau),
	\qquad
	f_2\left(-\frac{1}{8\tau}\right)=64\tau^4f_2(\tau).
	\]

	Substitution in the defining period integrals gives
	\begin{align}
		R_1(X)&=-16X^2R_1\left(-\frac{1}{16X}\right),
		\label{eq:R1-Fricke}\\
		R_2(X)&=-8X^2R_2\left(-\frac{1}{8X}\right).
		\label{eq:R2-Fricke}
	\end{align}
	These identities are the period-polynomial form of the functional equations induced by the corresponding Fricke involutions.  Indeed, if
	\[
	R_j(X)=A_j+B_jX+C_jX^2,
	\]
	then comparison of constant and quadratic coefficients gives
	\[
	A_1=-\frac{C_1}{16},
	\qquad
	A_2=-\frac{C_2}{8}.
	\]
Expanding the period polynomial and evaluating the resulting period integrals by the Mellin transform, we obtain
\begin{equation}\label{eq:mellin-Rj}
	R_j(X)=-\frac{i}{4\pi^3}L(f_j,3)
	+\frac{1}{2\pi^2}L(f_j,2)X+\frac{i}{2\pi}L(f_j,1)X^2.
\end{equation}
Comparing coefficients gives
\[
A_j=-\frac{i}{4\pi^3}L(f_j,3),
\qquad
C_j=\frac{i}{2\pi}L(f_j,1).
\]
	It follows immediately that
	\[
	L(f_1,3)=\frac{\pi^2}{8}L(f_1,1),
	\qquad
	L(f_2,3)=\frac{\pi^2}{4}L(f_2,1).
	\]
	Thus the endpoint critical values for each form are already related by its Fricke symmetry.

	We now use the modular-symbol decomposition
	\[
	\{0,\infty\}
	=
	\left\{0,\frac14\right\}
	+
	\left\{\frac14,\infty\right\}.
	\]
	Accordingly,
	\[
	R_1(X)
	=
	\int_0^{1/4}F(\tau)(\tau-X)^2\,d\tau
	+
	\int_{1/4}^{i\infty}F(\tau)(\tau-X)^2\,d\tau.
	\]
	
	For the first integral, put
	\[
	\tau=\sigma z=\frac{z}{4z+1}.
	\]
	Then
	\[
	d\tau=\frac{dz}{(4z+1)^2},
	\]
	and
	\[
	\tau-X
	=
	\frac{(1-4X)z-X}{4z+1}.
	\]
	Since
	\[
	F(\sigma z)=-4i(4z+1)^4G(z),
	\]
	we obtain
	\[
	\begin{aligned}
		\int_0^{1/4}F(\tau)(\tau-X)^2\,d\tau
		&=
		-4i(1-4X)^2
		\int_0^{i\infty}
		G(z)
		\left(z-\frac{X}{1-4X}\right)^2\,dz.
	\end{aligned}
	\]
	Because $G(z)=f_2(2z)$, the change of variables $w=2z$ gives
	\[
	\int_0^{i\infty}G(z)(z-A)^2\,dz
	=
	\frac18R_2(2A).
	\]
	Hence
	\[
	\int_0^{1/4}F(\tau)(\tau-X)^2\,d\tau
	=
	-\frac{i}{2}(1-4X)^2
	R_2\left(\frac{2X}{1-4X}\right).
	\]
	
	For the second integral, put
	\[
	\tau=\frac{z+1}{4}.
	\]
	The half-translation identity for $\eta$ gives
	\[
	F\left(\frac{z+1}{4}\right)
	=
	if_2\left(\frac z4\right).
	\]
	Therefore
	\[
	\begin{aligned}
		\int_{1/4}^{i\infty}F(\tau)(\tau-X)^2\,d\tau
		&=
		iR_2\left(-\frac{1-4X}{4}\right).
	\end{aligned}
	\]
	Using the reflection relation
	\[
	R_2(Y)=-8Y^2R_2\left(-\frac{1}{8Y}\right)
	\]
	with
	\[
	Y=-\frac{1-4X}{4},
	\]
	we obtain
	\[
	\int_{1/4}^{i\infty}F(\tau)(\tau-X)^2\,d\tau
	=
	-\frac{i}{2}(1-4X)^2
	R_2\left(\frac{1}{2(1-4X)}\right).
	\]

Combining the two sides gives
\begin{equation}\label{eq:weight4-modular-symbol}
	R_1(X)
	=
	-\frac{i}{2}(1-4X)^2
	\left[
	R_2\left(\frac{2X}{1-4X}\right)
	+
	R_2\left(\frac{1}{2(1-4X)}\right)
	\right].
\end{equation}
Comparing coefficients and using \eqref{eq:mellin-Rj} yields the cross-relations among the critical values.
	
	\begin{theorem}
		The critical values of $f_1$ and $f_2$ satisfy
		\[
		L(f_1,3)=\frac{\pi}{2}L(f_2,2)
		=\frac{\pi^2}{8}L(f_1,1),
		\]
		and
		\[
		L(f_2,3)=\frac{\pi}{4}L(f_1,2)
		=\frac{\pi^2}{4}L(f_2,1).
		\]
	\end{theorem}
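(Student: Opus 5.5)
The plan is to expand both sides of \eqref{eq:weight4-modular-symbol} as polynomials in $X$ of degree at most $2$ and compare coefficients. The endpoint relations $L(f_1,3)=\frac{\pi^2}{8}L(f_1,1)$ and $L(f_2,3)=\frac{\pi^2}{4}L(f_2,1)$ have already been obtained from the Fricke reflections \eqref{eq:R1-Fricke}, \eqref{eq:R2-Fricke}. What remains is the two cross-relations $L(f_1,3)=\frac{\pi}{2}L(f_2,2)$ and $L(f_2,3)=\frac{\pi}{4}L(f_1,2)$.

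First, write $R_2(Y)=A_2+B_2Y+C_2Y^2$ and clear denominators:
\[
(1-4X)^2R_2\!\left(\tfrac{2X}{1-4X}\right)=A_2(1-4X)^2+2B_2X(1-4X)+4C_2X^2,
\]
\[
(1-4X)^2R_2\!\left(\tfrac{1}{2(1-4X)}\right)=A_2(1-4X)^2+\tfrac12B_2(1-4X)+\tfrac14C_2.
\]
Adding these gives an explicit quadratic. The right side of \eqref{eq:weight4-modular-symbol} is $-\frac i2$ times it, with
\[
\text{constant term } 2A_2+\tfrac12B_2+\tfrac14C_2,\qquad
\text{$X$-coefficient } -16A_2+2B_2-2B_2=-16A_2,
\]
and $X^2$-coefficient $32A_2-8B_2+4C_2$.

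Second, simplify using $A_2=-C_2/8$, i.e.\ $C_2=-8A_2$. The constant term becomes $\frac12B_2$, so $A_1=-\frac i4B_2$. The $X$-coefficient gives $B_1=8iA_2$. The $X^2$-coefficient gives $C_1=-\frac i2(-8B_2)=4iB_2$, which is consistent with $A_1=-C_1/16$ and so serves as a check.

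Third, translate back through \eqref{eq:mellin-Rj}:
\[
A_j=-\tfrac{i}{4\pi^3}L(f_j,3),\qquad B_j=\tfrac{1}{2\pi^2}L(f_j,2).
\]
Then $A_1=-\frac i4B_2$ reads $\frac{1}{4\pi^3}L(f_1,3)=\frac{1}{8\pi^2}L(f_2,2)$, i.e.\ $L(f_1,3)=\frac{\pi}{2}L(f_2,2)$. Likewise $B_1=8iA_2$ reads $\frac{1}{2\pi^2}L(f_1,2)=\frac{2}{\pi^3}L(f_2,3)$, i.e.\ $L(f_2,3)=\frac{\pi}{4}L(f_1,2)$. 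Combining these with the Fricke endpoint relations gives both chains of the theorem.

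The main obstacle is not the linear algebra but bookkeeping. The signs and the factor $i$ in \eqref{eq:weight4-modular-symbol} and in the Mellin normalization \eqref{eq:mellin-Rj} must be consistent. I would verify the $X^2$ coefficient against the independently derived Fricke relation $A_1=-C_1/16$. If that check fails, I would recheck the half-translation step $F((z+1)/4)=if_2(z/4)$ and the orientation of the path $\{1/4,\infty\}$ before trusting the cross-relations.
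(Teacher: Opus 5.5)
Your proposal is correct and follows essentially the paper's proof: expand the modular-symbol identity \eqref{eq:weight4-modular-symbol}, substitute $C_2=-8A_2$ from the Fricke relation to get $A_1=-\frac{i}{4}B_2$ and $B_1=8iA_2$, then translate through \eqref{eq:mellin-Rj}. The only difference is that the paper works modulo $X^2$, while you expand the full quadratic; this also gives $C_1=4iB_2$, which you correctly note is consistent with the independent Fricke relation $A_1=-C_1/16$ for $f_1$.
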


\begin{proof}
	The endpoint relations
	\[
	L(f_1,3)=\frac{\pi^2}{8}L(f_1,1),
	\qquad
	L(f_2,3)=\frac{\pi^2}{4}L(f_2,1)
	\]
	were obtained above from the Fricke involutions. It remains to obtain the two cross-relations.
	
	Recall that
	\[
	R_j(X)=A_j+B_jX+C_jX^2.
	\]
	Comparing the constant and quadratic coefficients in the reflection relation
	\eqref{eq:R2-Fricke} gives
	\[
	A_2=-\frac{C_2}{8}.
	\]
	
	We now extract the constant and linear terms from the modular-symbol identity
	\eqref{eq:weight4-modular-symbol}. Working modulo $X^2$, we have
	\[
	(1-4X)^2\equiv 1-8X \pmod{X^2},
	\]
	and
	\[
	\frac{2X}{1-4X}\equiv 2X,
	\qquad
	\frac{1}{2(1-4X)}\equiv \frac{1}{2}+2X
	\pmod{X^2}.
	\]
	Hence \eqref{eq:weight4-modular-symbol} becomes
	\[
	R_1(X)
	\equiv
	-\frac{i}{2}(1-8X)
	\left[
	R_2(2X)
	+
	R_2\left(\frac{1}{2}+2X\right)
	\right]
	\pmod{X^2}.
	\]
	Since
	\[
	R_2(2X)\equiv A_2+2B_2X\pmod{X^2},
	\]
	and
	\[
	R_2\left(\frac{1}{2}+2X\right)
	\equiv
	A_2+\frac{1}{2}B_2+\frac{1}{4}C_2
	+(2B_2+2C_2)X
	\pmod{X^2},
	\]
	we obtain
	\[
	R_1(X)
	\equiv
	-\frac{i}{2}(1-8X)
	\left[
	2A_2+\frac{1}{2}B_2+\frac{1}{4}C_2
	+(4B_2+2C_2)X
	\right]
	\pmod{X^2}.
	\]
	Using $A_2=-C_2/8$, this simplifies to
	\[
	R_1(X)
	\equiv
	-\frac{i}{2}(1-8X)
	\left[
	\frac{1}{2}B_2+(4B_2-16A_2)X
	\right]
	\pmod{X^2},
	\]
	and hence
	\[
	R_1(X)
	\equiv
	-\frac{i}{4}B_2+8iA_2X
	\pmod{X^2}.
	\]
	Comparing the constant and linear coefficients therefore gives
	\[
	A_1=-\frac{i}{4}B_2,
	\qquad
	B_1=8iA_2.
	\]
	
	By \eqref{eq:mellin-Rj},
	\[
	A_1=-\frac{i}{4\pi^3}L(f_1,3),
	\qquad
	B_2=\frac{1}{2\pi^2}L(f_2,2),
	\]
	so the first relation gives
	\[
	L(f_1,3)=\frac{\pi}{2}L(f_2,2).
	\]
	Similarly,
	\[
	B_1=\frac{1}{2\pi^2}L(f_1,2),
	\qquad
	A_2=-\frac{i}{4\pi^3}L(f_2,3),
	\]
	and the second relation gives
	\[
	L(f_2,3)=\frac{\pi}{4}L(f_1,2).
	\]
	Combining these cross-relations with the endpoint relations proves the result.
\end{proof}
	
	Thus the relations between the critical values of $f_1$ and $f_2$ arise from two complementary modular symmetries: the Fricke involutions relate the endpoint periods of each form separately, while the modular-symbol decomposition through the cusp $1/4$ relates the periods of the two forms to one another.
\section{A quadratic twist of conductor $3$}\label{conductor3}
\label{sec:conductor-three}

The preceding weight-$4$ example suggests that a quadratic twist of small conductor may give rise to a small slash-stable system. We illustrate this mechanism with the nontrivial Dirichlet character modulo $3$.

Let
\[
\chi_{-3}(n):=\left(\frac{-3}{n}\right)
\]
and consider the weight-$4$ cusp form
\[
\phi(\tau):=\eta(\tau)^2\eta(2\tau)^2\eta(3\tau)^2\eta(6\tau)^2.
\]
The form $\phi$ is a newform of level $6$.  Let
\[
\psi:=\phi\otimes\chi_{-3}.
\]
Then $\psi$ is a weight-$4$ newform of level $18$.  Since $\chi_{-3}(n)=0$ for $3\mid n$, the Fourier expansion of $\psi$ contains no terms $q^{3n}$.

The additive description of the twist becomes particularly simple in this case.  For $3\nmid n$,
\[
e^{-2\pi i n/3}=-\frac{1}{2}-\frac{i\sqrt{3}}{2}\chi_{-3}(n).
\]
Moreover, since $\phi$ is a $U_3$-eigenform with eigenvalue $-3$, the $3$-depletion of $\phi$ is
\[
H(\tau):=\phi(\tau)+3\phi(3\tau)=\psi\otimes\chi_{-3}.
\]
It follows that
\begin{equation}\label{eq:cond3-translation}
\psi\left(\tau-\frac{1}{3}\right)=	-\frac{1}{2}\psi(\tau)-\frac{i\sqrt{3}}{2}H(\tau).
\end{equation}
Thus translation by $-1/3$ already acts on the two-dimensional space generated by $\psi$ and $H$.

For the calculations below, we extend the weight-$4$ slash operator to matrices
\[
\gamma=
\begin{pmatrix}
	a&b\\
	c&d
\end{pmatrix}
\in\operatorname{GL}_2^+(\mathbb{Q})
\]
by
\[
(F|_4\gamma)(\tau)
:=
\det(\gamma)^2(c\tau+d)^{-4}
F\left(\frac{a\tau+b}{c\tau+d}\right).
\]
Let
\[
W_{18}:=
\begin{pmatrix}
	0&-1\\
	18&0
\end{pmatrix}.
\]
The Fricke involutions of $\phi$ and $\psi$ have eigenvalue $+1$.  At the common level $18$ this gives

\begin{equation}\label{eq:cond3-fricke-basic}
	\psi|_4W_{18}=\psi,\qquad
	\phi|_4W_{18}=9\phi(3\tau),\qquad
	\phi(3\tau)|_4W_{18}=\frac{1}{9}\phi.
\end{equation}

It is convenient to introduce the second oldform combination
\[
J(\tau):=\phi(\tau)+27\phi(3\tau).
\]
Then
\begin{equation}\label{eq:cond3-fricke-HJ}
	H|_4W_{18}=\frac{1}{3}J,
	\qquad
	J|_4W_{18}=3H.
\end{equation}

Set
\[
U:=
\begin{pmatrix}
	1&-\frac{1}{3}\\
	0&1
\end{pmatrix},
\qquad
\sigma:=W_{18}^{-1}UW_{18}
=
\begin{pmatrix}
	1&0\\
	6&1
\end{pmatrix}.
\]
Thus
\[
\sigma(i\infty)=\frac{1}{6}.
\]
Combining \eqref{eq:cond3-translation} with \eqref{eq:cond3-fricke-HJ} gives
\begin{equation}\label{eq:cond3-sigma}
	\psi|_4\sigma
	=
	-\frac{1}{2}\psi-\frac{i\sqrt{3}}{6}J.
\end{equation}
In particular, the conductor-$3$ translation, after Fricke conjugation, produces a small slash-stable system associated with the cusp $1/6$.

To obtain a closed cusp relation, define
\[
\mathcal A:=\sigma W_{18}
=
\begin{pmatrix}
	0&-1\\
	18&-6
\end{pmatrix}.
\]
A direct calculation gives
\[
\mathcal A^4=-324I,
\]
so that $\mathcal A$ has projective order $4$.  Its action on the cusps is
\[
i\infty\longmapsto0\longmapsto\frac{1}{6}
\longmapsto\frac{1}{3}\longmapsto i\infty.
\]
Thus the orbit of the edge from $i\infty$ to $0$ forms the boundary of the ideal quadrilateral with vertices
\[
i\infty,\qquad 0,\qquad \frac{1}{6},\qquad \frac{1}{3}.
\]

Since
\[
\mathcal A=\sigma W_{18}=W_{18}U,
\]
the action of $\mathcal A$ may be computed from the translation and Fricke
relations above. In addition to \eqref{eq:cond3-translation}, we have
\[
H|_4U=-\frac12H-\frac{i\sqrt3}{2}\psi,
\]
and, since $J=H+24\phi(3\tau)$ and $\phi(3\tau)$ is invariant under
$\tau\mapsto\tau-\frac13$,
\[
J|_4U=J-\frac32H-\frac{i\sqrt3}{2}\psi.
\]

Combining these identities with the Fricke relations
\eqref{eq:cond3-fricke-basic} and \eqref{eq:cond3-fricke-HJ} gives

\begin{align}
	\psi|_4\mathcal A
	&=
	-\frac{1}{2}\psi-\frac{i\sqrt{3}}{2}H,
	\label{eq:cond3-A1}\\
	\psi|_4\mathcal A^2
	&=
	\frac{i\sqrt{3}}{2}H-\frac{i\sqrt{3}}{6}J,
	\label{eq:cond3-A2}\\
	\psi|_4\mathcal A^3
	&=
	-\frac{1}{2}\psi+\frac{i\sqrt{3}}{6}J.
	\label{eq:cond3-A3}
\end{align}
Hence the entire cusp cycle closes in the three-dimensional space
\[
\operatorname{span}\{\psi,H,J\}.
\]

For a weight-$4$ cusp form $F$, write
\[
R_F(X):=\int_0^{i\infty}F(\tau)(\tau-X)^2\,d\tau.
\]
We first record the change-of-variables formula underlying the quadrilateral relation.  If
\[
\gamma=
\begin{pmatrix}
	a&b\\
	c&d
\end{pmatrix},
\qquad D=\det(\gamma)>0,
\]
then
\[
\int_{\gamma\alpha}^{\gamma\beta}
F(\tau)(\tau-X)^2\,d\tau
=
\frac{(a-cX)^2}{D}
\int_\alpha^\beta
(F|_4\gamma)(z)
\left(
z-\frac{dX-b}{a-cX}
\right)^2\,dz.
\]

Applying this identity to the four sides of the ideal quadrilateral and using
Cauchy's theorem gives

\begin{equation}\label{eq:cond3-period}
	\begin{aligned}
		0={}&R_\psi(X)
		+18X^2R_{\psi|_4\mathcal A}
		\left(\frac{1}{3}-\frac{1}{18X}\right)
		+(1-6X)^2R_{\psi|_4\mathcal A^2}
		\left(\frac{1-3X}{3(1-6X)}\right)\\
		&\quad
		+2(1-3X)^2R_{\psi|_4\mathcal A^3}
		\left(\frac{1}{6(1-3X)}\right).
	\end{aligned}
\end{equation}
The auxiliary forms $H$ and $J$ are oldform combinations of $\phi$.  Since
\[
R_{\phi(3\cdot)}(X)=\frac{1}{27}R_\phi(3X),
\]
we have
\begin{equation}\label{eq:cond3-oldforms}
	R_H(X)=R_\phi(X)+\frac{1}{9}R_\phi(3X),
	\qquad
	R_J(X)=R_\phi(X)+R_\phi(3X).
\end{equation}
Consequently, \eqref{eq:cond3-period} is an explicit relation between the period polynomials of the two primitive forms $\phi$ and $\psi$.

Write
\[
R_\psi(X)=A_\psi+B_\psi X+C_\psi X^2,
\qquad
R_\phi(X)=A_\phi+B_\phi X+C_\phi X^2.
\]
The Fricke involutions at levels $18$ and $6$ give
\[
R_\psi(X)=-18X^2R_\psi\left(-\frac{1}{18X}\right),
\qquad
R_\phi(X)=-6X^2R_\phi\left(-\frac{1}{6X}\right),
\]
and therefore
\begin{equation}\label{eq:cond3-fricke-coeff}
	C_\psi=-18A_\psi,
	\qquad
	C_\phi=-6A_\phi.
\end{equation}
Using \eqref{eq:cond3-oldforms} and \eqref{eq:cond3-fricke-coeff}, we obtain
\[
R_H(X)
=
\frac{10}{9}A_\phi+\frac{4}{3}B_\phi X-12A_\phi X^2,
\]
and
\[
R_J(X)
=
2A_\phi+4B_\phi X-60A_\phi X^2.
\]

We now extract only the constant and linear terms of the quadrilateral relation \eqref{eq:cond3-period}.  Substituting \eqref{eq:cond3-A1}--\eqref{eq:cond3-A3} and reducing modulo $X^2$ gives
\begin{align}
	0\equiv{}&
	A_\psi-\frac{1}{6}B_\psi
	+\frac{i\sqrt{3}}{9}\left(10A_\phi+2B_\phi\right) \notag\\
	&+
	\left(
	2B_\psi-\frac{40i\sqrt{3}}{3}A_\phi
	\right)X
	\pmod{X^2}.
	\label{eq:cond3-modX2}
\end{align}
Comparison of the linear coefficient gives
\[
B_\psi=\frac{20i\sqrt{3}}{3}A_\phi.
\]
Substitution into the constant coefficient of \eqref{eq:cond3-modX2} then gives
\[
A_\psi=-\frac{2i\sqrt{3}}{9}B_\phi.
\]

The Mellin transform identifies
\[
A_F=-\frac{i}{4\pi^3}L(F,3),
\qquad
B_F=\frac{1}{2\pi^2}L(F,2).
\]
We therefore obtain the following cross-relations.

\begin{theorem}\label{thm:quadratic-twist-critical-values}
	The critical values of the quadratic-twist pair $\phi$ and $\psi=\phi\otimes\chi_{-3}$ satisfy
	
\begin{equation}\label{eq:quadratic-twist-cross-relations}
	L(\psi,3)=\frac{4\pi}{3\sqrt{3}}L(\phi,2),
	\qquad
	L(\phi,3)=\frac{\pi\sqrt{3}}{10}L(\psi,2).
\end{equation}	
\end{theorem}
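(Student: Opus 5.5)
The plan is to read the two identities in \eqref{eq:quadratic-twist-cross-relations} directly off the coefficient relations that the quadrilateral relation \eqref{eq:cond3-period} has already produced, namely
\[
B_\psi=\frac{20i\sqrt{3}}{3}A_\phi,\qquad A_\psi=-\frac{2i\sqrt{3}}{9}B_\phi,
\]
and then translate them into critical values via the Mellin identifications $A_F=-\frac{i}{4\pi^3}L(F,3)$ and $B_F=\frac{1}{2\pi^2}L(F,2)$. Accordingly, the proof will first make sure that these two coefficient relations are justified, and then carry out the substitution.

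For the justification, I would work through the steps preceding the statement in order.

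First, I would check the slash formulas \eqref{eq:cond3-A1}--\eqref{eq:cond3-A3}. Using $\mathcal A=W_{18}U$, this amounts to composing the translation law \eqref{eq:cond3-translation}, the formulas for $H|_4U$ and $J|_4U$, and the Fricke relations \eqref{eq:cond3-fricke-basic}--\eqref{eq:cond3-fricke-HJ}.

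Second, I would apply the change-of-variables formula to each side $\{\mathcal A^k(i\infty),\mathcal A^k(0)\}$ of the quadrilateral and sum via Cauchy's theorem, truncating near the cusps as in Section~\ref{three-theta}. This yields \eqref{eq:cond3-period}.

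Third, I would substitute \eqref{eq:cond3-oldforms} together with the Fricke constraints \eqref{eq:cond3-fricke-coeff}, so that every term in \eqref{eq:cond3-period} is written in terms of $A_\psi,B_\psi,A_\phi,B_\phi$.

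Fourth, I would reduce modulo $X^2$. The second term needs care: it involves $R(\tfrac13-\tfrac1{18X})$ multiplied by $18X^2$, and this product is a polynomial in $X$ whose low-order coefficients come from the top coefficients of the relevant period polynomial. This is exactly why the quadratic coefficients $C$ enter, and why \eqref{eq:cond3-fricke-coeff} is needed to eliminate them.

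With the two relations in hand, the final substitution is routine. From $B_\psi=\frac{20i\sqrt3}{3}A_\phi$ we get
\[
\frac{1}{2\pi^2}L(\psi,2)=\frac{20i\sqrt3}{3}\cdot\Bigl(-\frac{i}{4\pi^3}\Bigr)L(\phi,3)=\frac{5\sqrt3}{3\pi^3}L(\phi,3),
\]
so $L(\phi,3)=\frac{3\pi}{10\sqrt3}L(\psi,2)=\frac{\pi\sqrt3}{10}L(\psi,2)$. From $A_\psi=-\frac{2i\sqrt3}{9}B_\phi$ we get
\[
-\frac{i}{4\pi^3}L(\psi,3)=-\frac{2i\sqrt3}{9}\cdot\frac{1}{2\pi^2}L(\phi,2),
\]
so $L(\psi,3)=\frac{4\pi\sqrt3}{9}L(\phi,2)=\frac{4\pi}{3\sqrt3}L(\phi,2)$. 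These are precisely the two identities in \eqref{eq:quadratic-twist-cross-relations}.

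I expect the main obstacle to be the bookkeeping in the reduction of \eqref{eq:cond3-period} modulo $X^2$, above all the inverted term $18X^2R_{\psi|_4\mathcal A}(\frac13-\frac1{18X})$. A sign error in any of the constants $-\frac12$, $\pm\frac{i\sqrt3}{2}$, $\pm\frac{i\sqrt3}{6}$ would change the numerical factors $\frac{20}{3}$ and $\frac{2}{9}$. I would guard against this by recomputing the $X$-linear coefficient independently, and as a sanity check I would compare the resulting relations against the functional-equation symmetry of $\psi$ at level $18$.
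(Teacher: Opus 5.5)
Your proposal is correct and follows the paper's own route. The paper likewise reduces the quadrilateral relation \eqref{eq:cond3-period} modulo $X^2$, using the oldform expressions for $R_H,R_J$ and the Fricke constraints, to obtain $B_\psi=\frac{20i\sqrt3}{3}A_\phi$ and $A_\psi=-\frac{2i\sqrt3}{9}B_\phi$, and your Mellin substitution converting these into the two stated identities is carried out correctly.
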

\begin{remark} The Fricke relations also give $$L(\psi,3)=\frac{\pi^2}{9}L(\psi,1),\qquad L(\phi,3)=\frac{\pi^2}{3}L(\phi,1).$$ The second identity is already contained in Zhou's analysis of $f_{4,6}=\phi$ \cite{MR3798884}.
 \end{remark}

\begin{remark}
	This example follows the same general mechanism as the conductor-$4$ pair, but with a different cusp geometry. The conductor-$3$ twist leads to a two-dimensional translation packet, while Fricke conjugation produces a small slash-stable system. The relevant cusp orbit forms an ideal quadrilateral rather than an ideal triangle, and the resulting period-polynomial identity yields the cross-relations among the critical values of $\phi$ and $\psi$.
\end{remark}

\subsection{Connection with elliptic and Bessel moments}\label{sec:bessel-moments}

The quadratic-twist relations of Theorem~\ref{thm:quadratic-twist-critical-values} also have a natural interpretation in terms of special-function moments. Indeed,
$$\phi(\tau)=\eta(\tau)^2\eta(2\tau)^2\eta(3\tau)^2\eta(6\tau)^2$$
is the weight-$4$, level-$6$ modular form denoted by $f_{4,6}$ in Zhou's paper \cite{MR3798884}. Zhou proved the Bessel-moment evaluations
$$\operatorname{IKM}(2,4;1)=\int_0^\infty I_0(t)^2K_0(t)^4t\,dt=\frac{\pi^2}{2}L(\phi,1)=\frac{3}{2}L(\phi,3)\,,$$
and
$$\frac{3}{\pi^2}\operatorname{IKM}(1,5;1)=\operatorname{IKM}(3,3;1)=\frac{3}{2}L(\phi,2)\,,$$
where
$$\operatorname{IKM}(a,b;1)=\int_0^\infty I_0(t)^aK_0(t)^b t\,dt\,,$$
and $I_0$ and $K_0$ denote the modified Bessel functions.

Theorem~\ref{thm:quadratic-twist-critical-values} gives an additional interpretation of these known Bessel moments in terms of critical values of the quadratic twist $\psi=\phi\otimes\chi_{-3}$. From
$$L(\phi,3)=\frac{\pi\sqrt{3}}{10}L(\psi,2)$$
and Zhou's evaluation of $\operatorname{IKM}(2,4;1)$, we obtain
$$\operatorname{IKM}(2,4;1)=\frac{3\pi\sqrt{3}}{20}L(\psi,2)\,.$$
Similarly, the relation
$$L(\psi,3)=\frac{4\pi}{3\sqrt{3}}L(\phi,2)$$
together with Zhou's evaluations at $L(\phi,2)$ gives
$$\operatorname{IKM}(3,3;1)=\frac{9\sqrt{3}}{8\pi}L(\psi,3)$$
and
$$\operatorname{IKM}(1,5;1)=\frac{3\pi\sqrt{3}}{8}L(\psi,3)\,.$$
Thus the two cross-relations of Theorem~\ref{thm:quadratic-twist-critical-values} connect the Bessel moments occurring in Zhou's formulas with critical values of the quadratic twist $\psi$.

This example belongs to a broader circle of relations among Bessel moments, complete elliptic integrals, Feynman integrals, and modular $L$-values. Elliptic integral evaluations of Bessel moments were studied systematically by Bailey, Borwein, Broadhurst, and Glasser \cite{MR2450513}. Broadhurst subsequently investigated relations among Feynman integrals, $L$-series, and Kloosterman moments \cite{MR3573666}, while Zhou developed analytic and modular methods for proving a number of the corresponding Bessel-moment identities \cite{MR3798884}. Related connections between moments of complete elliptic integrals, lattice sums, and critical values of modular $L$-functions were developed by Wan, by Rogers, Wan, and Zucker, and by Wan and Zucker \cite{MR2845511,MR3338042,MR3490555}.

From this point of view, Theorem~\ref{thm:quadratic-twist-critical-values} supplies an additional piece of structure. Zhou's formulas above concern critical values of the level-$6$ form $\phi$, whereas the modular-symbol relations obtained here connect these values directly with critical values of its quadratic twist $\psi$. Consequently, an independent elliptic integral or Bessel-moment representation of the corresponding critical values of $\psi$ would convert these cross-relations into identities between different special-function moments.

This leads naturally to the question of whether the cross-relations in \eqref{eq:quadratic-twist-cross-relations} can also be understood directly on the special-function side. Since the twist is governed by the character $\chi_{-3}$, one may ask whether the passage from $\phi$ to $\psi$ is reflected in a corresponding transformation of the associated elliptic integral parametrizations. Alternatively, independent Bessel-moment representations for the critical values of $\psi$ would turn the cross-relations above into explicit identities between different Bessel moments.

We do not pursue these questions further here. The purpose of the present discussion is to point out that the period-polynomial and modular-symbol relations obtained above fit naturally into the existing theory connecting modular $L$-values with elliptic and Bessel moments. A systematic investigation of the special-function representations associated with the quadratic twist $\psi$ is left for future research.

\section{A modular-symbol framework}
\label{sec:general-framework}

The preceding examples share a common structure: the relevant modular forms lie in a small finite-dimensional space that is stable under suitable slash operations. We briefly record the resulting modular-symbol framework.

Let
$$\mathbf F=(F_1,\ldots,F_r)^T$$
be a vector of cusp forms of weight $\kappa$ satisfying
$$\mathbf F|_\kappa\gamma=\rho(\gamma)\mathbf F$$
for the relevant modular transformations $\gamma$. For cusps $\alpha$ and $\beta$, define
$$\mathbf R_{\alpha,\beta}(X)=\int_\alpha^\beta\mathbf F(\tau)(\tau-X)^{\kappa-2}\,d\tau\,.$$
In particular,
$$\mathbf R(X)=\mathbf R_{0,\infty}(X)$$
has coefficients expressed in terms of critical $L$-values through the Mellin transform.

The modular-symbol relation
$$\mathbf R_{\alpha,\delta}(X)=\mathbf R_{\alpha,\beta}(X)+\mathbf R_{\beta,\delta}(X)$$
allows paths to be decomposed through intermediate cusps. If
$$\gamma=\begin{pmatrix}a&b\\c&d\end{pmatrix}\in\GL_2^+(\mathbb R),
\qquad D=\det(\gamma),$$
is one of these transformations, then a change of variables gives
$$\mathbf R_{\gamma\alpha,\gamma\beta}(X)=D^{1-\frac{\kappa}{2}}\rho(\gamma)(a-cX)^{\kappa-2}\mathbf R_{\alpha,\beta}\left(\frac{dX-b}{a-cX}\right)\,.$$
Thus, when the sides of a closed cusp polygon are related by modular transformations, the corresponding modular-symbol integrals can be rewritten in terms of the same finite collection of period polynomials. The resulting cusp decomposition therefore produces a finite system of linear relations among their coefficients.

\begin{remark}
	The method separates two related tasks. The first is geometric: one seeks a small slash-stable system of modular forms together with a closed cusp polygon whose sides are related by modular transformations, so that the corresponding modular-symbol integrals close within the same finite-dimensional system. The second is arithmetic: Fricke involutions, twisting identities, Hecke symmetries, or other relations may be used to reduce the resulting period system. The Mellin transform then converts these period relations into relations among critical $L$-values.
\end{remark}

\begin{remark}
	For a twist by a Dirichlet character of conductor $m$, the additive description of twisting introduces translations by fractions $\frac{a}{m}$ and hence additional cusps. This naturally leads to enlarging the slash-stable system by suitable twists or oldforms, as in the quadratic-twist examples above.
\end{remark}

\section*{Funding}

The author declares that no funds, grants, or other support were received during the preparation of this manuscript.	

\section*{Declaration of generative AI and AI-assisted technologies}

During the preparation of this work, the author used ChatGPT (OpenAI)
to assist with mathematical exploration, proof organization, and manuscript
preparation. All mathematical statements, proofs, computations, and
references were independently checked and verified by the author. The
author reviewed and edited the resulting content and takes full
responsibility for the content of the article.
\nocite{*}
\bibliographystyle{elsarticle-num}
\bibliography{references}

@article {MR3338042,
    AUTHOR = {Rogers, M. and Wan, J. G. and Zucker, I. J.},
     TITLE = {Moments of elliptic integrals and critical {$L$}-values},
   JOURNAL = {Ramanujan J.},
  FJOURNAL = {Ramanujan Journal. An International Journal Devoted to the
              Areas of Mathematics Influenced by Ramanujan},
    VOLUME = {37},
      YEAR = {2015},
    NUMBER = {1},
     PAGES = {113--130},
      ISSN = {1382-4090,1572-9303},
   MRCLASS = {11F03 (11M41 33C20 33C75 33E05)},
  MRNUMBER = {3338042},
       DOI = {10.1007/s11139-014-9584-5},
}

@article {MR3231318,
	AUTHOR = {Zhou, Yajun},
	TITLE = {Legendre functions, spherical rotations, and multiple elliptic
	integrals},
	JOURNAL = {Ramanujan J.},
	FJOURNAL = {Ramanujan Journal. An International Journal Devoted to the
	Areas of Mathematics Influenced by Ramanujan},
	VOLUME = {34},
	YEAR = {2014},
	NUMBER = {3},
	PAGES = {373--428},
	ISSN = {1382-4090,1572-9303},
	MRCLASS = {33C05 (33C55 33C75 33E05 44A15)},
	MRNUMBER = {3231318},
	MRREVIEWER = {S.\ Bhargava},
	DOI = {10.1007/s11139-013-9502-2},

}

@article {MR2845511,
	AUTHOR = {Wan, James G.},
	TITLE = {Moments of products of elliptic integrals},
	JOURNAL = {Adv. in Appl. Math.},
	FJOURNAL = {Advances in Applied Mathematics},
	VOLUME = {48},
	YEAR = {2012},
	NUMBER = {1},
	PAGES = {121--141},
	ISSN = {0196-8858,1090-2074},
	MRCLASS = {33C75 (33C20 60G50)},
	MRNUMBER = {2845511},
	MRREVIEWER = {Daniele\ Ritelli},
	DOI = {10.1016/j.aam.2011.04.007},
}

@article {MR3490555,
	AUTHOR = {Wan, J. G. and Zucker, I. J.},
	TITLE = {Integrals of {$K$} and {$E$} from lattice sums},
	JOURNAL = {Ramanujan J.},
	FJOURNAL = {Ramanujan Journal. An International Journal Devoted to the
	Areas of Mathematics Influenced by Ramanujan},
	VOLUME = {40},
	YEAR = {2016},
	NUMBER = {2},
	PAGES = {257--278},
	ISSN = {1382-4090,1572-9303},
	MRCLASS = {11M06 (11F03 33C20 33C75 33E05)},
	MRNUMBER = {3490555},
	MRREVIEWER = {Alia\ Hamieh},
	DOI = {10.1007/s11139-015-9710-z},

}

@misc{SomosEta,
	author       = {Michael Somos},
	title        = {Dedekind Eta Function Product Identities},
	howpublished = {Archived website},
	note         = {Archived version of the former \texttt{eta.math.georgetown.edu} website, available at
	\url{https://web.archive.org/web/20190709150645/https://eta.math.georgetown.edu/index.html},
	accessed August 20, 2026}
}

@article{glaisher1907representations,
	title={On the representations of a number as the sum of two, four, six, eight, ten, and twelve squares},
	author={Glaisher, JWL},
	journal={Quart. J. Math},
	volume={38},
	pages={1--62},
	year={1907}
}

@article {MR314846,
	AUTHOR = {Manin, Yuri Ivanovich},
	TITLE = {Parabolic points and zeta functions of modular curves},
	JOURNAL = {Izv. Akad. Nauk SSSR Ser. Mat.},
	FJOURNAL = {Izvestiya Akademii Nauk SSSR. Seriya Matematicheskaya},
	VOLUME = {36},
	YEAR = {1972},
	PAGES = {19--66},
	ISSN = {0373-2436},
	MRCLASS = {14G10 (10D15 14H25)},
	MRNUMBER = {314846},
}

@article {MR345909,
	AUTHOR = {Manin, Yuri Ivanovich},
	TITLE = {Periods of cusp forms, and {$p$}-adic {H}ecke series},
	JOURNAL = {Mat. Sb. (N.S.)},
	FJOURNAL = {Matematicheski\u i\ Sbornik. Novaya Seriya},
	VOLUME = {92(134)},
	YEAR = {1973},
	PAGES = {378--401},
	ISSN = {0368-8666},
	MRCLASS = {10D05 (12A70)},
	MRNUMBER = {345909},
	}

@article {MR463119,
	AUTHOR = {Shimura, Goro},
	TITLE = {On the periods of modular forms},
	JOURNAL = {Math. Ann.},
	FJOURNAL = {Mathematische Annalen},
	VOLUME = {229},
	YEAR = {1977},
	NUMBER = {3},
	PAGES = {211--221},
	ISSN = {0025-5831,1432-1807},
	MRCLASS = {10D15},
	MRNUMBER = {463119},
	DOI = {10.1007/BF01391466},
}

@article {MR3848417,
	AUTHOR = {Li, Wen-Ching Winnie and Long, Ling and Tu, Fang-Ting},
	TITLE = {Computing special {$L$}-values of certain modular forms with
	complex multiplication},
	JOURNAL = {SIGMA Symmetry Integrability Geom. Methods Appl.},
	FJOURNAL = {SIGMA. Symmetry, Integrability and Geometry. Methods and
	Applications},
	VOLUME = {14},
	YEAR = {2018},
    PAGES={090},
	ISSN = {1815-0659},
	MRCLASS = {11F11 (11F67 11M36 33C05)},
	MRNUMBER = {3848417},
	MRREVIEWER = {Wenjun\ Ma},
	DOI = {10.3842/SIGMA.2018.090},
}

@article {MR3798884,
	AUTHOR = {Zhou, Yajun},
	TITLE = {Wick rotations, {E}ichler integrals, and multi-loop {F}eynman
	diagrams},
	JOURNAL = {Commun. Number Theory Phys.},
	FJOURNAL = {Communications in Number Theory and Physics},
	VOLUME = {12},
	YEAR = {2018},
	NUMBER = {1},
	PAGES = {127--192},
	ISSN = {1931-4523,1931-4531},
	MRCLASS = {81S40 (11F03 58D30)},
	MRNUMBER = {3798884},
	MRREVIEWER = {Emilio\ Elizalde},
	DOI = {10.4310/CNTP.2018.v12.n1.a5},

}

@article {MR2450513,
	AUTHOR = {Bailey, David H. and Borwein, Jonathan M. and Broadhurst,
	David and Glasser, M. L.},
	TITLE = {Elliptic integral evaluations of {B}essel moments and
	applications},
	JOURNAL = {J. Phys. A},
	FJOURNAL = {Journal of Physics. A. Mathematical and Theoretical},
	VOLUME = {41},
	YEAR = {2008},
	NUMBER = {20},
	PAGES = {205203},
	ISSN = {1751-8113,1751-8121},
	MRCLASS = {33C10 (33E05 33F05)},
	MRNUMBER = {2450513},
	MRREVIEWER = {Subuhi\ Khan},
	DOI = {10.1088/1751-8113/41/20/205203},

}

@article {MR3573666,
	AUTHOR = {Broadhurst, David},
	TITLE = {Feynman integrals, {L}-series and {K}loosterman moments},
	JOURNAL = {Commun. Number Theory Phys.},
	FJOURNAL = {Communications in Number Theory and Physics},
	VOLUME = {10},
	YEAR = {2016},
	NUMBER = {3},
	PAGES = {527--569},
	ISSN = {1931-4523,1931-4531},
	MRCLASS = {11G40 (81Q30)},
	MRNUMBER = {3573666},
	MRREVIEWER = {Ioulia\ N.\ Baoulina},
	DOI = {10.4310/CNTP.2016.v10.n3.a3},
	
}

@book {MR1641658,
	AUTHOR = {Borwein, Jonathan M. and Borwein, Peter B.},
	TITLE = {Pi and the {AGM}},
	SERIES = {Canadian Mathematical Society Series of Monographs and
	Advanced Texts},
	VOLUME = {4},
	NOTE = {A study in analytic number theory and computational complexity,
	Reprint of the 1987 original,
	A Wiley-Interscience Publication},
	PUBLISHER = {John Wiley \& Sons, Inc., New York},
	YEAR = {1998},
	PAGES = {xvi+414},
	ISBN = {0-471-31515-X},
	MRCLASS = {11Y60 (11B65 68Q25)},
	MRNUMBER = {1641658},
}
\end{document}